\documentclass[pdflatex,sn-mathphys-num]{sn-jnl}

\usepackage{graphicx}%
\usepackage{multirow}%
\usepackage{amsmath,amssymb,amsfonts}%
\usepackage{amsthm}%
\usepackage{mathrsfs}%
\usepackage[title]{appendix}%
\usepackage{xcolor}%
\usepackage{textcomp}%
\usepackage{manyfoot}%
\usepackage{booktabs}%
\usepackage{algorithm}%
\usepackage{algorithmic}%
\usepackage{listings}%
\usepackage{thmtools}%

\theoremstyle{thmstyleone}
\newtheorem{theorem}{Theorem}  
\newtheorem{proposition}{Proposition}  
\newtheorem{definition}{Definition}
\newtheorem{lemma}{Lemma}

\theoremstyle{thmstyletwo}

\theoremstyle{thmstylethree}

\begin{document}

\title[Zeros of $\Phi(a,b,z)$ With Variable $a$]{Bounding the Gap Between Zeros of the Variable-\\Parameter Confluent Hypergeometric Function}

\author{\fnm{Steven} \sur{Langel}}\email{slangel@mitre.org}

\affil{\orgname{The MITRE Corporation}, \orgaddress{\city{Bedford}, \postcode{01730}, \state{MA}, \country{USA}}}

\abstract{This paper derives a lower bound on the spacing between adjacent zeros of the confluent hypergeometric function $\Phi(a,b,z)$ when $a$ is variable and $(b,z) \in \mathbb{R}^+$ are known and fixed. Monotonicity of the bound is established, and the results are used to assess the accuracy of asymptotic approximations for the first passage probability of a Wiener process.\\ \\Approved for public release; distribution unlimited. Public release case number 25-2921. \\ \\ NOTICE: This technical data was produced for the U. S. Government under Contract No. FA870225CB001, and is subject to the Rights in Technical Data-Noncommercial Items Clause DFARS 252.227-7013 (FEB 2014).}

\keywords{Confluent hypergeometric function, Nevanlinna characteristic, inverse Laplace transform, first passage problem, Wiener process}

\pacs[MSC Classification]{33C15, 44A10, 60J70, 30D35}


\maketitle

\insert\footins{\noindent\footnotesize\textcopyright\ 2026 The MITRE Corporation. ALL RIGHTS RESERVED.}

\section{Introduction}

The confluent hypergeometric function $\Phi(a,b,z)$ arises in the solution to many problems in science and engineering. It is particularly relevant in quantum mechanics, where it is the solution to Schr\"odinger's equation for a variety of potentials, including the Coulomb, harmonic and Morse potentials \cite{Ishkhanyan}. Other applications include optics, quantum chemistry, classical electrodynamics, heat transfer and general relativity (see \cite{Mathews} and the references therein). Most often, $z$ is the variable with $a$ and $b$ representing known physical parameters. However, there are exceptions. In Coulomb scattering, for example, expansions of $\Phi(a,b,z)$ in powers of $a$ have been used to gain deeper insights into Born approximations for the scattering wave function \cite{Gasaneo}. Another instance occurs in the study of first passage phenomena, where the goal is to determine the probability that a random event first happens at some time $t$. This problem is applicable to many topics, including Brownian motion, cellular mutation, development of optimal financial strategies, the formation of dark matter halos, and fault detection in communication systems \cite{Redner}, \cite{Masoliver}.

Consider the first passage problem for a scalar Ornstein-Uhlenbeck process $x(t)$, where we are interested in determining the probability $w(\tau)$ that $\lvert x(t) \rvert$ first crosses a threshold $c$ at some time $t \geq \tau$. Assuming that the initial value $x(0) = x$ is such that $\lvert x \rvert < c$, it can be shown that the Laplace transform of $w(\tau)$ is \cite[Eq. (7)]{Dirkse}

\begin{equation*}
	\mathcal{L}\{w(\tau)\} = \frac{c e^{-c^2/2}}{\sqrt{2\pi}} \frac{\Phi(s+1, 3/2, c^2/2)}{s \Phi(s,1/2,c^2/2)}
\end{equation*}\\
\noindent Notice that the Laplace variable $s$ appears in the first parameter of the hypergeometric functions. Since an analytic expression does not exist for the inverse transform, an approximate inversion is derived in \cite{Dirkse} for $c \rightarrow \infty$ using asymptotic expansions of the hypergeometric functions. To the best of our knowledge, there has been no rigorous investigation into how accurate such approximations are for the first passage probability. The results of this paper provide the ability to perform such an assessment.

To see how, first note that given the zeros of $\Phi(s, 1/2, c^2/2)$, all of which are real and simple \cite[pp. 185--186]{Buchholz}, $w(\tau)$ is expressible as a residue expansion\footnote[1]{This claim is not obvious. Reference \cite{Ricciardi} alludes to its validity, but does not provide a proof. We prove in Appendix \ref{Appendix B} that the inverse of $\mathcal{L}\{(w(\tau))\}$ can indeed be written as a residue expansion.}. The zeros and corresponding residues can be computed with high precision using numerical methods. However, not \emph{all} of the residues can be obtained because there are an infinite number of zeros \cite[p. 185]{Buchholz}. Thus, at best $w(\tau)$ can be written as a known, finite sum of residues plus some unknown truncation error. We will see in Section \ref{sec:first passage problem} that lower and upper bounds on the truncation error are obtainable if a lower bound on the spacing between adjacent zeros of $\Phi(s,1/2,c^2/2)$ can be found. This allows $w(\tau)$ to be placed within a known interval which subsequently allows one to assess the accuracy of existing approximations for $w(\tau)$.

Little work has been done concerning the distribution of zeros of $\Phi(a,b,z)$ when $a$ is the variable and $(b,z)$ are fixed. We already mentioned some properties, namely, that for $(b,z)\in \mathbb{R}^+$, the zeros are real and simple, and occur in infinite sets. Another important property is that each zero $a^*$ increases as $z$ increases \cite[p. 187]{Buchholz}, a fact that we will use to establish a link between $a^*$ and the zeros $z^*$ of $\Phi(a,b,z)$ when $(a,b)$ are fixed. This connection is crucial because there are numerous results concerning the distribution of $z^*$ that can be leveraged to gain insight into the distribution of $a^*$. One result relevant to this work is the lower bound in \cite[Eq. (83)]{Deano} on the ratio of two consecutive, positive real zeros of $\Phi(a,b,z)$ when $a$ and $b$ take on real, fixed values. We will see that \cite[Eq. (83)]{Deano} is the key to obtaining a lower bound on the gap between consecutive zeros of $\Phi(a,b,z)$ for $(b,z) \in \mathbb{R}^+$.

The paper is organized as follows. A summary of the two main theorems proved in this work and some preliminary results are given in Section \ref{sec:main results}. In Section \ref{sec:properties of a_star}, we show that the spacing $\Delta a$ between consecutive zeros of $\Phi(a,b,z)$ for known $(b,z) \in \mathbb{R}^+$ is governed by the solution to an initial value problem (IVP). We use a comparison theorem in Section \ref{sec:analytic bound} to approximate the IVP so that an analytic lower bound on $\Delta a$ is obtainable. The bound, which we prove is monotonic in Section \ref{sec:monotonicity}, is subsequently used in Section \ref{sec:first passage problem} to analyze the accuracy of asymptotic approximations for the first passage probability of a Wiener process. Conclusions and recommendations for future work are given in Section \ref{sec:conclusions}.

\section{Main Contributions and Preliminary Results}\label{sec:main results}

This paper will prove the following two Theorems.
\begin{restatable}{theorem}{FirstTheorem}\label{primary theorem}
	Let $\Phi(a, b, z_l)$ be the confluent hypergeometric function of the first kind, where $(b, z_l) \in \mathbb{R}^+$ are known and fixed, and let $a_k^* < a_{k-1}^*$ be two consecutive real zeros of $\Phi(a,b,z_l)$. Then with $g_k = e^{2\pi / \sqrt{(b-2a_k^*)^2 - (b-1)^2}}$ and $\beta_k = b - a_k^* - 1$, if $z_l < \beta_{k-1} / g_{k-1}$, a lower bound on $\Delta a = a_{k-1}^* - a_k^*$ is
	
	\begin{equation*}
		\Delta a \geq \beta_k - \frac{\beta_k}{4 g_k} \left[ 2 + \sqrt{z_l/\beta_k} (g_k - 1) \right]^2
	\end{equation*}
\end{restatable}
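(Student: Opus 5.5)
We think of each zero $a_k^*$ as a function $a_k^*(z)$ of $z\in(0,z_l]$. Along each such curve we have $\Phi(a_k^*(z),b,z)=0$. Buchholz \cite[p. 187]{Buchholz} shows that $a_k^*(z)$ is increasing in $z$. This lets us switch viewpoints: fixing $a=a_k^*(z_l)$ and $b$, the point $z_l$ is a positive real zero of $\Phi(a,b,\cdot)$. Likewise, the neighbouring curve $a_{k-1}^*(z)$ meets the horizontal line $a=a_k^*$ at some point $z'<z_l$. In this way the gap $\Delta a$ in the $a$-direction is converted into a gap between consecutive $z$-zeros of $\Phi(a_k^*,b,z)$. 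That gap is controlled by the ratio bound \cite[Eq. (83)]{Deano}. Consistent with the definition of $g_k$, we expect that bound to read $z_{j+1}/z_j \ge g$ for $a<0$, with exponent $2\pi/\sqrt{(b-2a)^2-(b-1)^2}$. This comes from the Liouville-transformed equation $w''+\bigl(\tfrac{b}{2}-a\bigr)/z\,w\,\ldots$ on the logarithmic scale.

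The plan follows the outline in the introduction. Differentiating $\Phi(a^*(z),b,z)=0$ gives an ODE $da^*/dz=-\Phi_z/\Phi_a$, so $\Delta a$ is governed by an initial value problem. We would not try to compute $\Phi_a$ directly. Instead we use a comparison theorem to bound the slope of $a^*(z)$ from above by a simpler Riccati-type IVP. The natural quantity is $\beta=b-a-1$. Using the Kummer-type transformation $z\Phi'$ and the Sturm/Prüfer angle for $u=z^{b/2}e^{-z/2}\Phi$, we expect a comparison equation of the form $d\beta/dz\ge -\bigl(\sqrt{\beta}-\sqrt{z}\bigr)^2$-ish, or equivalently $d\sqrt{\beta}/d\sqrt{z}\ge$ a constant-coefficient expression. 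Its solution produces the square $\bigl[2+\sqrt{z_l/\beta_k}(g_k-1)\bigr]^2$.

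Concretely, we would integrate the comparison IVP from the point $z'=z_l/g$ (the previous zero of the $a_k^*$ curve in $z$, obtained from Deaño's ratio bound) up to $z_l$. Along the way we track how far $a_{k-1}^*(z)$ can have risen above $a_k^*$. Solving the linear-in-$\sqrt{\beta}$ comparison ODE explicitly gives $\beta_k-\Delta a\le \frac{\beta_k}{4g_k}\bigl[2+\sqrt{z_l/\beta_k}(g_k-1)\bigr]^2$, which rearranges to the stated inequality. The hypothesis $z_l<\beta_{k-1}/g_{k-1}$ is needed so that the relevant curves stay in the oscillatory region $z<\beta$, where Deaño's bound and the comparison coefficients have the right sign. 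It also guarantees that the intermediate zero actually exists in $(0,z_l)$.

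The main obstacle is the comparison step. We must justify a one-sided inequality between the true IVP for $a^*(z)$, whose right side involves $\Phi_a$, and an explicitly solvable model. We would first try Sturm comparison on the normal form $u''+\bigl(\tfrac{\beta+1-z/2\cdots}{z}-\tfrac{b(b-2)}{4z^2}-\tfrac14\bigr)u=0$, with $z$ replaced by $t^2$. Monotonicity in $a$ of the coefficient then yields the required one-sided bound on the slope of $a^*(z)$.
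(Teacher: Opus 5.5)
Your architecture matches the paper's. You treat the zeros as increasing curves $a^*(z)$, bound their slope from below by an explicitly solvable ODE via a comparison argument, and use Dea\~no's ratio bound to control the length of the $z$-interval over which one curve climbs to the level of its neighbour. Your backward variant follows the upper curve $a_{k-1}^*(z)$ from $z_l$ down to the previous zero $z'<z_l/g_k$ of $\Phi(a_k^*,b,\cdot)$. It is a legitimate mirror image of the paper's forward variant, which follows the lower curve from $z_l$ up to $z_{l+1}>z_l g_{k-1}\ge z_l g_k$.

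The gap is the one step that carries all the content: a \emph{quantitative lower} bound on $da^*/dz$. You never obtain one. Sturm comparison and monotonicity of the normal-form coefficient in $a$ give only the sign of $da^*/dz$, not a rate. The inequality you do write, $d\beta/dz\ge-(\sqrt{\beta}-\sqrt{z})^2$, is an upper bound on $da^*/dz$ and points the wrong way. In either set-up an upper bound on the slope gives a lower bound on $\beta_{k-1}$, hence an upper bound on $\Delta a$. Your final formula is therefore asserted, not derived.

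The missing ingredient is this. At a zero, Buchholz's identity $\int_0^z t^{b-1}e^{-t}\Phi^2(a^*,b,t)\,dt=-z^be^{-z}\Phi_z\Phi_a$ turns $-\Phi_z/\Phi_a$ into $(a^*/b)^2z^be^{-z}\Phi^2(a^*+1,b+1,z)/I$. A lower bound on the slope is therefore an \emph{upper} bound on $I$. The paper obtains it in three moves:
\begin{itemize}
\item it splits $I$ with a contiguous relation into a piece evaluated in closed form, using the product-integral formula for $\int_0^z t^{b-1}e^{-t}\Phi(\xi,b,t)\Phi(\eta,b,t)\,dt$ and $\Phi(a^*,b,z)=0$, plus a cross term;
\item it applies Cauchy--Schwarz to the cross term and uses $t\le z$;
\item it closes the estimate with the identity $I=\frac{(b-1)^2}{\beta}\int_0^z t^{b-2}e^{-t}\Phi^2(a^*,b-1,t)\,dt$.
\end{itemize}
The result is $I\,(1-\sqrt{z/\beta})\le (a^*)^2e^{-z}z^{b-1}\Phi^2(a^*+1,b,z)/\beta$. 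Combined with $\Phi(a^*+1,b+1,z)=(b/z)\Phi(a^*+1,b,z)$ at a zero, this gives $da^*/dz\ge\beta/z-\sqrt{\beta/z}$, valid only for $z<\beta$. That restriction is the real role of the hypothesis $z_l<\beta_{k-1}/g_{k-1}$; Dea\~no's bound itself needs only $\beta>0$.

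Equivalently, $d\sqrt{\beta z}/dz\le 1/2$ along a zero curve. The correct comparison equation is $d\sqrt{\beta}/d\sqrt{z}=1-\sqrt{\beta/z}$, not your guess. Integrating it from $(z_l,\beta_k)$ to $z=g_kz_l$ produces exactly the bracket $[2+\sqrt{z_l/\beta_k}(g_k-1)]^2$.

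If you feed this slope bound into your backward set-up, integrate from $(z',\beta_k)$, and use that $x\mapsto\sqrt{\beta_k x}-x/2$ is increasing on $(0,\beta_k)$ with $z'<z_l/g_k$, you get the same expression with $z_l$ replaced by $z_l/g_k$ inside the bracket. This only requires $z_l<\beta_{k-1}$. It is not literally the stated formula, but it implies it.
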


\begin{restatable}{theorem}{SecondTheorem}\label{secondary theorem}
	For $b\in \mathbb{R}^+$, let $\{y\}$ be the set of roots of the polynomial
	
	\begin{equation*}
		\frac{1}{4\pi^2} y^6 - \frac{1}{\pi} y^5 + y^4 - \frac{(b-1)^2}{\pi} y^3 + (b^2 - 2) y^2 + (b-1)^2 (2b - 3) = 0
	\end{equation*}\\
	For the $i$th root $y_i$, let $\bar{a}_i = (b/2) - (1/2)[(b-1)^2 + y_i^2]^{1/2}$. Then with
	
	\begin{equation*}
		\begin{aligned}
			\bar{a}^* = &\min_{1\leq i \leq 6} \bar{a}_i \\[1ex]
			&\mathrm{such} \; \mathrm{that} \; \frac{4\pi (-1 - \bar{a}_i + b) (b - 2 \bar{a}_i)}{[(b - 2 \bar{a}_i) ^ 2 - (b - 1) ^ 2]^{3/2}} = 1 \; \mathrm{and} \; \mathrm{Im} (\bar{a}_i) = 0
		\end{aligned}
	\end{equation*}\\
	\noindent the bound $\Delta a$ is a monotonically decreasing function of $a_k^*$ for $a_k^* \in (-\infty, \bar{a}^*)$.
\end{restatable}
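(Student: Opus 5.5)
The statement immediately above is Theorem \ref{secondary theorem}. My plan is to treat the bound of Theorem \ref{primary theorem} as an explicit function of the single variable $a = a_k^*$ (with $b, z_l$ fixed) and show its derivative is negative on $(-\infty,\bar a^*)$. Write $D(a) = (b-2a)^2-(b-1)^2$, $g(a) = e^{2\pi/\sqrt{D}}$ and $\beta(a) = b-a-1$, so that
\begin{equation*}
B(a) = \beta - \frac{\beta}{4g}\left[2+\sqrt{z_l/\beta}\,(g-1)\right]^2 = \beta - \frac{\beta}{g} - \sqrt{z_l\beta}\,\frac{g-1}{g} - \frac{z_l}{4}\frac{(g-1)^2}{g}.
\end{equation*}
Expanding the square separates $B$ into a $z_l$-free part $\beta(1-1/g)$ and terms involving $z_l$. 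Note that $\beta' = -1$. Also $g' = g\cdot(-2\pi)\tfrac12 D^{-3/2}D'$ with $D' = -4(b-2a)$, hence $g'/g = 4\pi(b-2a)D^{-3/2}$. For $a$ below the smaller root of $D=0$ we have $b-2a>0$, so $g$ is increasing in $a$.

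The first step is to differentiate the $z_l$-free part:
\begin{equation*}
\frac{d}{da}\,\beta(1-g^{-1}) = -(1-g^{-1}) + \beta\,\frac{g'}{g}\,g^{-1} = g^{-1}\left[\,1-g + \frac{4\pi\beta(b-2a)}{D^{3/2}}\right].
\end{equation*}
The expression $4\pi\beta(b-2a)/D^{3/2}$ is exactly the quantity in the side condition of the theorem. Since $g>1$, this derivative is negative whenever $4\pi\beta(b-2a)/D^{3/2}\le 1$, and more generally whenever it is below $g-1$. The boundary case, where this ratio equals $1$, is what defines $\bar a_i$. As $a\to-\infty$ the ratio behaves like $\pi/(2|a|)^{1/2}\cdot$const$\to0$, so the sign is correct far to the left. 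Monotonicity then holds up to the first (smallest) real point where the ratio reaches $1$, provided that point is the first crossing.

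The second step is to check the algebra that turns the condition ``ratio $=1$'' into the stated sextic. I set $y = \sqrt{D}$, so that $b-2a = \sqrt{(b-1)^2+y^2}$, which matches the formula for $\bar a_i$, and $\beta = (b-2)/2 + \tfrac12\sqrt{(b-1)^2+y^2}$. The equation $4\pi\beta(b-2a) = y^3$ then contains a single square root. Isolating it and squaring should produce the degree-six polynomial (after dividing by $4\pi^2$). Squaring can introduce extraneous roots, which is why the theorem filters for reality and for the original equation being satisfied, and then takes the minimum.

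The third step is the $z_l$-dependent terms, $-\sqrt{z_l\beta}(1-g^{-1}) - \tfrac{z_l}{4}(g-2+g^{-1})$. Their derivative involves $\beta' = -1$, which makes $-\sqrt{z_l\beta}$ increase as $a$ increases, and this competes with the decreasing factor. I would try to show that the whole derivative stays negative under the hypothesis $z_l<\beta/g$ carried over from Theorem \ref{primary theorem}. Substituting $\sqrt{z_l}<\sqrt{\beta/g}$ should let the positive contribution $\tfrac12\sqrt{z_l/\beta}(1-g^{-1})$ be absorbed by $(1-g^{-1})$ from the first step. The remaining $z_l$-terms contain $-g'$ factors, which are already negative since $g'>0$. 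This absorption step is the main obstacle: the sign bookkeeping must close for all admissible $z_l$, not only as $z_l\to0$. If it fails directly, my fallback is to view $B$ as a function of $u=\sqrt{z_l}$ and show that $\partial_a B$ is a concave quadratic in $u$ that is negative at both endpoints $u=0$ and $u=\sqrt{\beta/g}$.
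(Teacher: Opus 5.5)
Your Step 2 (setting $y=\sqrt{D}$ and squaring to reach the sextic) is fine and matches the paper. Your derivative of the $z_l$-free part, $g^{-1}[1-g+r]$, is also correct. Here $r=4\pi\beta(b-2a)/D^{3/2}$, which is the paper's $\beta_k\psi$.

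The step that fails is the next sentence. That derivative is negative \emph{if and only if} $r<g-1$, and $r\le 1$ does not imply this. The reason is that $g-1=e^{2\pi/\sqrt D}-1$ drops below $1$ as soon as $\sqrt D>2\pi/\ln 2$, and it tends to $0$. Your check far to the left does not rescue the step, because $r$ does not vanish faster than $g-1$ (incidentally, $r\sim\pi/|a|$, not $|a|^{-1/2}$). With $s=b-2a$, both quantities equal $2\pi/s+O(s^{-2})$, and
\[
g-1-r=\frac{2\pi(\pi+2-b)}{s^{2}}+O(s^{-3}).
\]
So for $b>\pi+2$ the $z_l$-free part is \emph{increasing} in $a$ for all sufficiently negative $a$. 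Concretely, for $b=10$ one finds $\bar a^*\approx-3.1$, yet at $a=-5$ you have $g-1\approx 0.42<r\approx 0.62$.

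This cannot be absorbed by the bookkeeping in your third step. The $z_l$-dependent terms and their $a$-derivatives are $O(\sqrt{z_l})$, and arbitrarily small $z_l$ satisfies $z_l<\beta_{k-1}/g_{k-1}$. So at such $a$, $\partial_a B\to(1-g+r)/g>0$ as $z_l\to 0$. For $b=10$ the limiting bound $\beta(1-g^{-1})$ is about $4.03$ at $a=-6$ and about $4.15$ at $a=-5$. It is therefore increasing on part of $(-\infty,\bar a^*)$, and the conclusion cannot follow from $r\le 1$ and $z_l<\beta/g$ alone. You would need an extra hypothesis, or a different threshold built from $r<g-1$ together with control of the $\eta$-terms. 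Possible extra hypotheses are a lower bound on $\eta=\sqrt{z_l/\beta}$ in terms of $a$, or a restriction on $b$.

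Your fallback has a separate logical flaw. A concave quadratic in $u=\sqrt{z_l}$ that is negative at both endpoints can still be positive in between; that argument needs convexity. In any case the value at $u=0$ is not negative here.

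The paper's own proof makes the equivalent slip, so following it will not close the gap. Its sufficient condition $(\eta-f/(2g_k))\beta_k\psi g_k\ge -1$ only yields $D\ge 1-f/g_k$, which is negative whenever $g_k<2$. At $\eta=0$ its $D$ is exactly your $(g_k-1-\beta_k\psi)/g_k$.
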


\noindent Some preliminary definitions and results are provided first that will lay the foundation for the technical developments of the paper.

\begin{definition}\label{def:confluent hypergeometric function}
	The confluent hypergeometric function is defined by the power series
	
	\begin{equation}
		\label{eq:confluent hypergeometric function}
		\Phi(a, b, z) = \sum_{n=0}^{\infty} \frac{(a)_n}{(b)_n n!}z^n
	\end{equation}\\
	\noindent where $(a)_n = a(a+1)\cdots(a+n-1)$. It is well known \cite{Hazewinkel} that as either a function of $z$ with $a$ and $b$ fixed, or as a function of $a$ with $b$ and $z$ fixed, $\Phi(a,b,z)$ is an entire function. It is a meromorphic function of $b$ with $a$ and $z$ fixed with simple poles at $b = 0, -1, -2, \dots$.
\end{definition}

\begin{definition}\label{def:Whittaker M function}
	The function $\Phi(a,b,z)$ is related to Whittaker's $\mathcal{M}$ function, defined as \cite{Buchholz}\footnote[2]{The normalizing factor $\Gamma(1+\mu)$ ensures that $\mathcal{M}_{\varkappa, \mu/2}(z)$ is defined even when $\mu$ is a negative integer.}
	
	\begin{equation}
		\label{eq:Whittaker M function}
		\mathcal{M}_{\varkappa, \mu/2}(z) = \frac{1}{\Gamma(1+\mu)} z^{(1+\mu)/2}e^{-z/2} \Phi\left(\frac{1+\mu}{2} - \varkappa, 1 + \mu, z\right)
	\end{equation}
\end{definition}

\begin{proposition}\label{prop:definite integrals}
	With $\text{Re}(b) > 0$ and $\xi \neq \eta$,
	
	\begin{equation}
		\label{eq:integral 1}
		\begin{split}
			\int\limits_0^z t^{b-1}e^{-t} & \Phi(\xi,b,t)\Phi(\eta,b,t)dt \\ & = \frac{e^{-z}z^b}{b(\eta - \xi)} [\eta \Phi(\xi,b,z)\Phi(\eta + 1, b + 1, z) - \xi \Phi(\eta, b, z) \Phi(\xi + 1, b + 1, z)]
		\end{split}
	\end{equation}\\
	
	\noindent Given $\Phi(a,b,t)$ with $\text{Re}(b)>0$ and $k=b/2-a$,
	
	\begin{equation}
		\label{eq:integral 2}
		\begin{split}
			\int\limits_0^z \left( \frac{k}{t} - \frac{1}{2} \right) & e^{-t} t^b \Phi^2(a,b,t) dt \\ & = z^{b+1} e^{-z} \left[ \left( \frac{2k - b + 1}{2z} \right) \Phi^2(a,b,z) + \left(\frac{a}{b}\right)^2 \Phi^2(a+1,b+1,z) \right] \\ & + z^b e^{-z} \frac{a}{b} (b-z-1) \Phi(a,b,z) \Phi(a+1,b+1,z)
		\end{split}
	\end{equation}
\end{proposition}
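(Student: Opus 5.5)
The statement to prove is Proposition~\ref{prop:definite integrals}. Both identities are of the form ``integral equals boundary expression vanishing at $0$,'' so I would prove each by differentiating the right-hand side in $z$ and checking that it equals the integrand. Then I would note that the right-hand side vanishes at $z=0$: it carries a factor $z^b$ with $\mathrm{Re}(b)>0$, and $\Phi(\cdot,\cdot,0)=1$. The tools are the derivative formula $\frac{d}{dz}\Phi(a,b,z)=\frac{a}{b}\Phi(a+1,b+1,z)$ and Kummer's equation $z\Phi''+(b-z)\Phi'-a\Phi=0$.

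For \eqref{eq:integral 1}, I would write $u=\Phi(\xi,b,t)$ and $v=\Phi(\eta,b,t)$. In self-adjoint form Kummer's equation reads
\[
\bigl(t^{b}e^{-t}u'\bigr)'=\xi\,t^{b-1}e^{-t}u,
\qquad
\bigl(t^{b}e^{-t}v'\bigr)'=\eta\,t^{b-1}e^{-t}v .
\]
Multiplying the first by $v$ and the second by $u$ and subtracting gives the Lagrange identity
\[
\frac{d}{dt}\Bigl[t^be^{-t}(u'v-uv')\Bigr]=(\xi-\eta)\,t^{b-1}e^{-t}uv.
\]
Integrating from $0$ to $z$ and dividing by $\xi-\eta\neq0$ yields the integral. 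Substituting $u'=\frac{\xi}{b}\Phi(\xi+1,b+1,z)$ and $v'=\frac{\eta}{b}\Phi(\eta+1,b+1,z)$ gives exactly the displayed bracket, with prefactor $e^{-z}z^b/(b(\eta-\xi))$. The lower limit contributes nothing because $t^b\to0$.

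For \eqref{eq:integral 2}, the integrand contains $\Phi^2$ with a weight, so I would use a Wronskian-free energy identity. I would write $y=\Phi(a,b,t)$ and $p=\frac{a}{b}\Phi(a+1,b+1,t)=y'$. I would define $F(z)$ as the right-hand side:
\[
F=z^{b+1}e^{-z}\Bigl[\tfrac{2k-b+1}{2z}y^2+p^2\Bigr]+z^be^{-z}(b-z-1)yp .
\]
Since $2k-b+1=1-2a$, the first term is $\tfrac{1-2a}{2}z^be^{-z}y^2$. Differentiating, I would replace every $p'=y''$ using $zy''=ay-(b-z)p$. I would then collect coefficients of $y^2$, $yp$ and $p^2$.

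I expect the $p^2$ and $yp$ terms to cancel. In the $p^2$ coefficient, the derivative of $z^{b+1}e^{-z}p^2$ contributes $(b+1-z)z^be^{-z}p^2$ plus the term from $2z^{b+1}e^{-z}pp'$, namely $2z^be^{-z}p\,(ay-(b-z)p)$. The $(b-z-1)yp$ term contributes $z^be^{-z}(b-z-1)p^2$. Together these should give zero. The surviving $y^2$ coefficient should be $z^{b-1}e^{-z}(kz^{0}\cdot z - z^2/2)\cdot\frac1z$, that is, $(k/z-1/2)z^be^{-z}y^2$. This coefficient bookkeeping is the only real obstacle: it is a routine but sign-sensitive computation. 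I would organize it by first computing $\frac{d}{dz}(z^be^{-z})=z^{b-1}e^{-z}(b-z)$ once and reusing it. If a mismatch appears, the fallback is to verify the identity termwise from the power series at small order, which fixes any constant discrepancy. Finally, $F(0)=0$ for $\mathrm{Re}(b)>0$, which completes the proof.
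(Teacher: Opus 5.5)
Your proposal is correct, but it takes a different route from the paper. The paper does not verify the identities directly. For the cross term it imports Buchholz's indefinite integral (4a) for Whittaker functions. For the squared term it uses a corrected form of (4$\beta$), which it rederives in Appendix C by letting $c_1\to c_2$ in the two-scale identity (4b). It then rewrites $\mathcal{M}_{\varkappa,\mu/2}$ and its derivative in terms of $\Phi$, and fixes the constant of integration from the behaviour as $z\to0$.

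You instead work directly with Kummer's equation. The self-adjoint form $(t^be^{-t}\Phi')'=a\,t^{b-1}e^{-t}\Phi$ gives the Lagrange identity for the first formula; this is essentially the mechanism behind Buchholz's (4a). For the second formula you differentiate the right-hand side. The bookkeeping you left open does close. Besides the $p^2$ cancellation you checked, the $yp$ coefficient is $(1-2a)+2a-1=0$; in the last term, the contributions $\pm(b-z-1)(b-z)/z$ coming from $(z^be^{-z})'$ and from $y''$ cancel. The $y^2$ coefficient is $\frac{1}{z}\bigl[\frac{(1-2a)(b-z)}{2}+a(b-z-1)\bigr]=\frac{k}{z}-\frac12$, so no power-series fallback is needed.

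Your argument is shorter and self-contained, and it avoids both the reliance on Buchholz and the erratum the paper has to correct. The paper's route has the advantage of deriving the right-hand sides rather than needing them in advance, and of tying them to the Whittaker-function results used elsewhere. In both approaches, $\mathrm{Re}(b)>0$ does the same job: it makes the integrand integrable at $t=0$ and makes the boundary expression vanish there.
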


\begin{proof}
	See Appendix \ref{Appendix A}.
\end{proof}

\begin{proposition}\label{prop:zeros a_star of Phi}
	For $(b,z) \in \mathbb{R}^+$ known and fixed, the zeros $a^*$ of $\Phi(a,b,z)$ are real and simple, and all reside on the $-a$ axis\footnote[3]{The zeros $a^*$ must also occur in infinite sets with $a^* = -\infty$ as a limiting point (see \cite[p. 185]{Buchholz}).}.
\end{proposition}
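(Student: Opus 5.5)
The plan is to prove the three claims in turn: all zeros are real, they are simple, and they are negative. Each follows from one of the integral identities of Proposition~\ref{prop:definite integrals} or from direct inspection of the series (\ref{eq:confluent hypergeometric function}).

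\emph{Reality.} Suppose $\Phi(\xi,b,z)=0$ for some $\xi\in\mathbb{C}\setminus\mathbb{R}$. Since $b,z$ are real, the coefficients of the series are real polynomials in $a$, so $\Phi(\bar\xi,b,z)=\overline{\Phi(\xi,b,z)}=0$, and $\eta=\bar\xi\neq\xi$. Apply (\ref{eq:integral 1}) with this pair $\xi,\eta$. Both products on the right contain a vanishing factor $\Phi(\xi,b,z)$ or $\Phi(\eta,b,z)$, so the right side is zero. The left side is
\[
\int_0^z t^{b-1}e^{-t}\lvert\Phi(\xi,b,t)\rvert^2\,dt .
\]
Because $b>0$ and $z>0$, the weight is positive on $(0,z)$. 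Moreover $\Phi(\xi,b,t)$ is analytic in $t$ with value $1$ at $t=0$, so it is not identically zero and the integral is strictly positive. This contradiction shows every zero $a^*$ is real.

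\emph{Negativity.} If $a\ge 0$, then every coefficient $(a)_n/((b)_n n!)$ is nonnegative, because $(a)_n\ge0$ and $(b)_n>0$. The $n=0$ term equals $1$, so $\Phi(a,b,z)\ge 1>0$ for $z>0$. Hence every zero satisfies $a^*<0$, i.e.\ it lies on the $-a$ axis.

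\emph{Simplicity.} This is the step I expect to require the most care. The idea is to divide (\ref{eq:integral 1}) by $\eta-\xi$ and let $\eta\to\xi=a^*$. Set
\[
N(\eta)=\eta\,\Phi(a^*,b,z)\Phi(\eta+1,b+1,z)-a^*\Phi(\eta,b,z)\Phi(a^*+1,b+1,z).
\]
At a real zero $a^*$ the first term of $N$ vanishes identically in $\eta$, so $N(\eta)=-a^*\Phi(\eta,b,z)\Phi(a^*+1,b+1,z)$. The integrand of (\ref{eq:integral 1}) is jointly continuous in $\eta$, so in the limit $\eta\to a^*$ the identity becomes
\[
\int_0^z t^{b-1}e^{-t}\Phi^2(a^*,b,t)\,dt=-\frac{e^{-z}z^b a^*}{b}\,\Phi(a^*+1,b+1,z)\,\partial_a\Phi(a^*,b,z).
\]
The left side is strictly positive by the same argument as before. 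Therefore $\partial_a\Phi(a^*,b,z)\neq0$, so the zero is simple; the identity also gives $a^*\neq0$ and $\Phi(a^*+1,b+1,z)\neq0$ as by-products.

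Two points need justification in this limit. First, the limit in $\eta$ may be taken under the integral sign, which holds because $\Phi$ is entire in $a$ and the interval $[0,z]$ is compact. Second, the real-zero structure is used to cancel the first term of $N$. The footnote's claims, that the zeros are infinite in number and accumulate only at $-\infty$, follow from Buchholz and from the fact that an entire function's zeros are isolated; I would simply cite these.
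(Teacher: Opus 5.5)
Your proof is correct, and it takes a different route from the paper. The paper argues nothing directly. It invokes Buchholz's theorem that, for real $\mu>-1$ and $z>0$, the zeros $\varkappa^*$ of $\mathcal{M}_{\varkappa,\mu/2}(z)$ are real and simple. It transfers this to $\Phi$ through (\ref{eq:Whittaker M function}) with $a=(1+\mu)/2-\varkappa$, $b=1+\mu$, and then uses the same series-positivity observation you use to place the zeros on the negative axis.

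You instead rebuild the cited result from the Wronskian identity (\ref{eq:integral 1}). That identity follows from $(t^be^{-t}w')'=a\,t^{b-1}e^{-t}w$, so it holds for complex $\xi,\eta$. The choice $\eta=\bar\xi$ is the classical Sturm--Liouville reality argument. The confluent limit $\eta\to a^*$ reproduces exactly (\ref{eq:4 alpha Buchholz}), whose strictly positive left side forces $\partial_a\Phi(a^*,b,z)\neq 0$. Proposition \ref{prop:definite integrals} is established in Appendix A independently of this proposition, so there is no circularity.

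The paper's route is shorter but rests entirely on an external theorem plus a parameter translation. Yours is self-contained within the paper. As a by-product it proves (\ref{eq:4 alpha Buchholz}), which Section \ref{sec:properties of a_star} later quotes from Buchholz. It also gives $a^*\Phi(a^*+1,b+1,z)\neq 0$, which is tacitly needed when $\partial\Phi/\partial a^*$ is eliminated to obtain (\ref{eq:del_a/del_z}).

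One small correction to your commentary: the first term of $N(\eta)$ vanishes merely because $\Phi(a^*,b,z)=0$. Reality of $a^*$ is used instead for the positivity of $\int_0^z t^{b-1}e^{-t}\Phi^2(a^*,b,t)\,dt$, which is why your reality step must come before the simplicity step.
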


\begin{proof}
	Reference \cite{Buchholz} proves that for real $\mu$ and $z$, with $\mu>-1$ and $z>0$, the zeros $\varkappa^*$ of $\mathcal{M}_{\varkappa,\mu/2}(z)$ are real and simple. Given the definition in (\ref{eq:Whittaker M function}), these zeros must also be the zeros of $\Phi((1+\mu)/2 - \varkappa,1+\mu,z)$, and from the series definition in (\ref{eq:confluent hypergeometric function}), will only occur when $\varkappa^* > (1+\mu)/2$. Otherwise, every term in the series will be positive given that $\mu>-1$ and $z>0$. In terms of $a = (1+\mu)/2 - \varkappa$ and $b = 1 + \mu$ we can thus conclude that the zeros $a^*$ of $\Phi(a,b,z)$ for $(b,z)\in \mathbb{R}^+$ are real and simple, and must reside on the $-a$ axis.
\end{proof}

\begin{proposition}\label{prop:number of zeros z_star}
	Let $-\infty < a < 0$ and $b>0$ be known and fixed. Then the number $N$ of positive real zeros $z^*$ of $\Phi(a,b,z)$ is given by

	\begin{equation}
		N = -[a]
	\end{equation}\\
	\noindent such that $[a]$ is the largest integer less than or equal to $a$.
\end{proposition}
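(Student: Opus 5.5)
The plan is to realise the count $N=-[a]$ as a statement about Laguerre polynomials for integer $a$, and then follow the positive zeros as $a$ varies continuously between consecutive integers. I take the standard route through Kummer's transformation and the limit $z\to\infty$, together with the fact, quoted before Proposition~\ref{prop:zeros a_star of Phi}, that zeros in $a$ are real and simple.

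\textbf{Integer case.} First I treat $a=-n$ with $n$ a positive integer. Then $\Phi(-n,b,z)$ is a constant multiple of the generalized Laguerre polynomial $L_n^{(b-1)}(z)$. Since $b-1>-1$, the Laguerre polynomials are orthogonal on $(0,\infty)$ with the positive weight $z^{b-1}e^{-z}$. Hence $L_n^{(b-1)}$ has exactly $n$ simple zeros, all of them in $(0,\infty)$. This gives $N=n=-[a]$.

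\textbf{Non-integer case, $-n-1<a<-n$.} Fix $b$ and regard the positive zeros $z^*(a)$ as functions of $a$.
\begin{enumerate}
\item The function $\Phi(a,b,0)=1$ never vanishes, and $\Phi$ is entire in $(a,z)$ jointly. So positive zeros can neither enter nor leave through $z=0$.
\item The zeros in $z$ are simple. A common zero of $\Phi$ and $\partial_z\Phi$ would force $\Phi\equiv0$, by uniqueness for Kummer's ODE $zw''+(b-z)w'-aw=0$ at a point $z_0>0$, which is a regular point. By the implicit function theorem, each zero therefore moves continuously with $a$.
\item No zero can escape to $+\infty$ while $a$ stays non-integer. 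Indeed, $\Phi(a,b,z)\sim \frac{\Gamma(b)}{\Gamma(a)}e^{z}z^{a-b}$ as $z\to\infty$, and this holds locally uniformly in $a$ away from the poles of $\Gamma(a)$. Since $\Gamma(a)\ne0$, $\Phi$ has a fixed sign for all large $z$, uniformly on compact $a$-intervals inside $(-n-1,-n)$.
\end{enumerate}
Together these show that $N(a)$ is constant on each interval $(-n-1,-n)$. The same argument shows $N$ is constant on $(-1,0)$.

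\textbf{Determining the constant.} I pin down the value on each interval by the behaviour at its endpoints.
\begin{enumerate}
\item On $(-1,0)$, take $a\to0^-$. Then $\Phi\to1$ uniformly on compacts, and by point 3 no zeros can lie at large $z$ uniformly as $a\to0^-$. I expect a direct argument is cleaner here: for $-1<a<0$ every term with $n\ge1$ has $(a)_n<0$, so $\Phi$ is decreasing in $z$ and tends to $-\infty$, since $\Gamma(a)<0$ there. Hence $N=1=-[a]$.
\item On $(-n-1,-n)$, take $a\to -n^-$. The $n$ zeros of $L_n^{(b-1)}$ persist nearby by Hurwitz's theorem. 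Because $\Gamma(a)$ changes sign as $a$ crosses $-n$, the large-$z$ sign of $\Phi$ for $a$ slightly below $-n$ differs from the sign of the degree-$n$ polynomial. This forces one additional zero coming in from $+\infty$, giving $N=n+1=-[a]$.
\end{enumerate}

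\textbf{Main obstacle.} The delicate step is making precise that exactly one zero, and not more, arrives from infinity near $a=-n$. My first attempt would be to exploit Proposition~\ref{prop:zeros a_star of Phi} together with the monotonicity fact that each zero $a^*$ increases with $z$ \cite[p.~187]{Buchholz}. For fixed $z$, the zeros $a^*$ are simple and each $a^*(z)$ is monotone, so the map $a^*\leftrightarrow z^*$ is a bijection along each branch. Counting the branches $a_k^*(z)$ that lie above a given $a$ then counts $z^*$ directly. As $z\to\infty$, $a_k^*(z)\to -(k-1)$, the Laguerre limits, and as $z\to0^+$, $a_k^*(z)\to-\infty$. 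Hence branch $k$ crosses level $a$ exactly once precisely when $a<-(k-1)$, which yields $N=\#\{k\ge1: -(k-1)>a\}=-[a]$. I would present this branch-counting argument as the main proof, with the continuity and asymptotic facts above used only to justify the limits of $a_k^*(z)$.
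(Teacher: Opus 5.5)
Your route is genuinely different from the paper's. The paper gives no argument of its own. It quotes Buchholz's formula (8$\alpha$) \cite[p.~182]{Buchholz} for the number of positive zeros of $z^{-(1+\mu)/2}\mathcal{M}_{\varkappa,\mu/2}(z)$, notes that this function vanishes exactly where $\Phi((1+\mu)/2-\varkappa,1+\mu,z)$ does, and substitutes $\varkappa=b/2-a$, $\mu=b-1$. Your branch-counting proof instead derives the count from facts the paper already uses elsewhere:
\begin{itemize}
\item the reality, simplicity and negativity of the $a$-zeros (Proposition~\ref{prop:zeros a_star of Phi});
\item the monotonicity of $a^*(z)$ from (\ref{eq:del_a/del_z}), which is strict by your item~2, since $\partial_z\Phi(a,b,z)=(a/b)\Phi(a+1,b+1,z)$;
\item the behaviour as $z\to0^+$ and $z\to\infty$;
\item the Laguerre case.
\end{itemize}
There is no circularity, because (\ref{eq:del_a/del_z}) is derived without Proposition~\ref{prop:number of zeros z_star}. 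Your proof is not more elementary in its dependencies, since it still rests on Buchholz for the $a$-zeros. What it buys is a precise version of the trajectory picture of Fig.~\ref{fig:example zero trajectories}. Your first approach (constancy of $N$ on each $(-n-1,-n)$ plus the sign change of $1/\Gamma(a)$) is correct as far as it goes. As you note, though, it only yields $N\ge n+1$ with $N\equiv n+1\pmod 2$.

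The one step you assert rather than prove is $a_k^*(z)\to-(k-1)$ as $z\to\infty$. This is precisely the obstacle you flagged, relocated. Each limit $L_k=\lim_{z\to\infty}a_k^*(z)$ exists by monotonicity, with $0\ge L_1\ge L_2\ge\cdots$. Your item~3 only shows that each $L_k$ is a nonpositive integer. It says nothing about how many branches approach each integer, and ``Laguerre limits'' is not yet an argument.

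You can close the gap with what you already have:
\begin{enumerate}
\item Each branch is continuous and strictly increasing, tends to $-\infty$ as $z\to0^+$, and never attains $L_k$. So your bijection gives $N(a)=\#\{k:L_k>a\}$ for every $a<0$.
\item At $a=-n$ the Laguerre count gives $\#\{k:L_k>-n\}=n$ for all $n\ge1$. Since the $L_k$ are nonincreasing, this means $L_n>-n\ge L_{n+1}$, hence $L_k\in(-k,-(k-1)]$.
\item Integrality then forces $L_k=-(k-1)$.
\end{enumerate}
Alternatively, Hurwitz's theorem applied to $e^{-z}z^{b-a}\Phi(a,b,z)/\Gamma(b)\to1/\Gamma(a)$, locally uniformly in $a\in\mathbb{C}$, puts exactly one zero near each nonpositive integer for large $z$.

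With this step inserted, $N(a)=\#\{k:-(k-1)>a\}=-[a]$ follows. The same counting also pins down the limit $a^*(z)\to-N$ used in Proposition~\ref{prop:a-domain zero spacing}.
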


\begin{proof}
	Equation (8$\alpha$) in \cite[p. 182]{Buchholz} states that with $+\infty > \varkappa \geq (1+\mu)/2$ and $\mu > -1$, the number $N$ of positive real zeros $z^*$ of $z^{-(1+\mu)/2} \mathcal{M}_{\varkappa,\mu/2}(z)$ is
	
	\begin{equation}
		\label{eq:number of zeros}
		N = -\left[\frac{1+\mu}{2} - \varkappa\right]
	\end{equation}\\
	\noindent Since $e^{-z/2}$ is an entire function, the zeros $z^*$ of $z^{-(1+\mu)/2} \mathcal{M}_{\varkappa,\mu/2}(z) = e^{-z/2} \Phi((1+\mu)/2 - \varkappa,1+\mu,z)$ are also the zeros of $\Phi((1+\mu)/2-\varkappa, 1+\mu,z)$. Then after making the substitutions $\varkappa=b/2-a$ and $\mu=b-1$ in (\ref{eq:number of zeros}), the result follows.
\end{proof}

The zero sequences $a^*$ and $z^*$ described in Propositions \ref{prop:zeros a_star of Phi} and \ref{prop:number of zeros z_star} will both be needed to prove Theorems \ref{primary theorem} and \ref{secondary theorem}. Therefore, it is instructive to have a labeling scheme for the elements of each set. We will adopt the scheme in Fig. \ref{fig:labeling scheme}.
\begin{figure}[!htb]
	\centering
	\includegraphics[width=\textwidth]{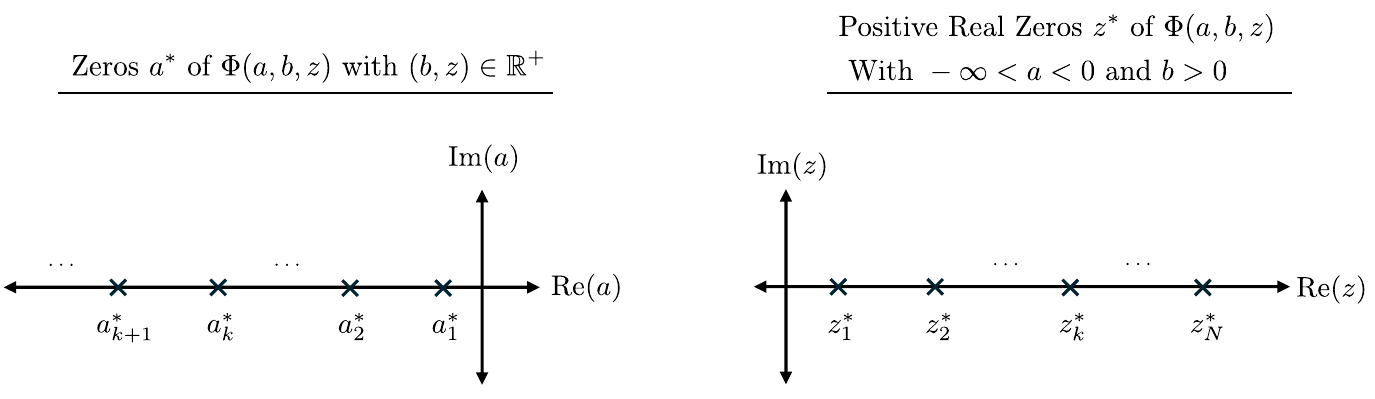}
	\caption{Labeling scheme for zero sequences $a^*$ and $z^*$.}
	\label{fig:labeling scheme}
\end{figure}

\begin{proposition}\label{prop:z-domain zero spacing}
	Consider real parameters $a$ and $b$ with $a<0$ and $b-a-1>0$, and let $0<z_1^* < z_2^* < \cdots < z_N^*$ be the positive real zeros of $\Phi(a,b,z)$. Then for any two consecutive zeros $z_l^*$ and $z_{l+1}^*$, the following inequality holds
	
	\begin{equation}
		\frac{z_{l+1}^*}{z_l^*} > \exp \left( \frac{2\pi}{\sqrt{(b-2a)^2 - (b-1)^2}} \right) \, , \, l \leq N-1
	\end{equation}
\end{proposition}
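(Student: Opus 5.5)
This proposition is the paper's citation of \cite[Eq. (83)]{Deano}. I would justify it independently by a Sturm comparison argument on the normal form of Kummer's equation. The function $w(z)=\Phi(a,b,z)$ satisfies
\[
zw''+(b-z)w'-aw=0 .
\]
I would pass to the Whittaker form. The function $u(z)=z^{b/2}e^{-z/2}\Phi(a,b,z)=\Gamma(b)\,\mathcal{M}_{\varkappa,\mu/2}(z)$, with $\varkappa=b/2-a$ and $\mu=b-1$, has exactly the same positive zeros as $\Phi$. It satisfies
\[
u''+\left(-\tfrac14+\tfrac{\varkappa}{z}+\tfrac{1-\mu^2}{4z^2}\right)u=0 .
\]
Since the claimed bound is multiplicative, the natural move is the change of variable $z=e^{x}$. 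Setting $u(e^x)=e^{x/2}v(x)$ removes the first-derivative term and gives
\[
v''+Q(x)v=0,\qquad Q(x)=-\tfrac14 e^{2x}+\varkappa e^{x}-\tfrac{\mu^2}{4}.
\]
Consecutive zeros $z_l^*<z_{l+1}^*$ of $\Phi$ then correspond to consecutive zeros $x_l<x_{l+1}$ of $v$, and the claim becomes
\[
x_{l+1}-x_l>\frac{2\pi}{\sqrt{4\varkappa^2-\mu^2}} .
\]
Here $4\varkappa^2-\mu^2=(b-2a)^2-(b-1)^2>0$, because $a<0$ and $b-a-1>0$ give $b-2a>|b-1|$.

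The key step is to bound $Q$ from above uniformly in $x$. Writing $t=e^x>0$, the quadratic $-\tfrac14 t^2+\varkappa t-\tfrac{\mu^2}{4}$ attains its maximum at $t=2\varkappa$, where its value is
\[
\varkappa^2-\tfrac{\mu^2}{4}=\tfrac14\left[(b-2a)^2-(b-1)^2\right]=:\omega^2 .
\]
So $Q(x)\le\omega^2$ for all $x$, with equality only at the single point $x=\log(2\varkappa)$. By the Sturm comparison theorem, between two consecutive zeros of $v$ any solution of $y''+\omega^2y=0$ must vanish at least once. Taking $y=\sin(\omega(x-x_l))$, which vanishes at $x_l$, its next zero $x_l+\pi/\omega$ must therefore satisfy $x_l+\pi/\omega\le x_{l+1}$.

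For strict inequality, I would use that $Q\not\equiv\omega^2$ on $[x_l,x_{l+1}]$, so the comparison is strict and $x_{l+1}-x_l>\pi/\omega$. I expect this strictness to be routine, via the standard Picone identity argument.

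Converting back gives
\[
\frac{z_{l+1}^*}{z_l^*}=e^{x_{l+1}-x_l}>\exp\!\left(\frac{2\pi}{\sqrt{(b-2a)^2-(b-1)^2}}\right).
\]
The main points needing care are checking that the Liouville-type transformation preserves the zeros, since $z^{b/2}$, $e^{-z/2}$ and $e^{x/2}$ never vanish for $z>0$, and checking the positivity of $4\varkappa^2-\mu^2$ under the stated hypotheses on $a$ and $b$. Proposition \ref{prop:number of zeros z_star} guarantees that $N\ge1$ zeros exist, so the condition $l\le N-1$ makes sense.
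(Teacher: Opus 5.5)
Your proof is correct, and it takes a different route from the paper. The paper gives no argument of its own here; it simply defers to \cite[Eq.~(83)]{Deano}.

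You derive the bound directly. The change of variables $z=e^x$, $u(e^x)=e^{x/2}v(x)$ does turn Whittaker's equation into $v''+Q(x)v=0$ with $Q(x)=-\tfrac14e^{2x}+\varkappa e^x-\tfrac{\mu^2}{4}$. Its supremum $\omega^2=\tfrac14[(b-2a)^2-(b-1)^2]$ is attained only at $e^x=2\varkappa$. Comparison with $\sin(\omega(x-x_l))$ then gives $x_{l+1}-x_l>\pi/\omega$.

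The strictness is indeed routine. Suppose $x_l+\pi/\omega\ge x_{l+1}$, and set $W=y'v-yv'$. Then $W(x_{l+1})=\int_{x_l}^{x_{l+1}}(Q-\omega^2)yv\,dx<0$, because $yv>0$ on the open interval and $Q<\omega^2$ except at one point. But also $W(x_{l+1})=-y(x_{l+1})v'(x_{l+1})\ge 0$, which is a contradiction.

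The citation buys brevity. Your argument buys transparency about where the hypotheses enter: they are used only to make $\omega^2=\tfrac14(1-2a)(2b-2a-1)$ positive. So the same inequality actually holds whenever $b-2a>|b-1|$, i.e.\ $a<\tfrac12$ and $b-a>\tfrac12$. Your proof also makes visible that the bound is uniform in $l$, since it uses only $\sup Q$.

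Your closing appeal to Proposition~\ref{prop:number of zeros z_star} assumes $b>0$, which is not among the hypotheses here. Nothing in the argument depends on it, though: if $N\le 1$ the inequality is simply vacuous.
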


\begin{proof}
	See \cite[Eq. (83)]{Deano}.
\end{proof}

\section{Properties of $a^*$}\label{sec:properties of a_star}

For $(b,z) \in \mathbb{R}^+$ known and fixed, all zeros $a^*$ of $\Phi(a,b,z)$ move closer to the origin as $z$ increases. To prove this statement, we first make an observation concerning the local behavior of $a^*$ as $z$ undergoes small variations. Consider the pair $(a^*,z^*)$, which is a solution to $\Phi(a^*,b,z^*)=0$. Given that $\Phi(a,b,z)$ is an entire function in $a$ and $z$, it is continuously differentiable everywhere. Furthermore, $\partial \Phi / \partial a \neq 0$ for any pair $(a^*,z^*)$ because $a^*$ is a simple zero. By the implicit function theorem \cite{Thomson}, a unique, differentiable function $\varphi$ exists such that $a^* = \varphi(z^*)$ and $\Phi(\varphi(z), b, z)=0$ for all $z$ in some open interval containing $z^*$. Thus, small changes in $z$ are accompanied by small changes in $a^*$. With this in mind, consider the following result, with $\Phi(a^*,b,z)=0$ \cite[p. 113, Eq. (4$\alpha$)]{Buchholz}

\begin{equation}
	\label{eq:4 alpha Buchholz}
	\begin{split}
		I &= \int\limits_0^z t^{b-1}e^{-t}\Phi^2(a^*,b,t)dt = -z^b e^{-z} \frac{\partial \Phi(a^*,b,z)}{\partial z} \frac{\partial \Phi(a^*,b,z)}{\partial a^*} \\[1ex]
		&= -\frac{a^* z^b}{b} e^{-z} \Phi(a^* + 1, b + 1, z) \frac{\partial \Phi(a^*,b,z)}{\partial a^*}
	\end{split}
\end{equation}\\
\indent When $z$ changes by a small amount $\varepsilon_z$, $a^*$ must also change by the amount $\varepsilon_{a^*} = (\partial a^* / \partial z) \varepsilon_z$ to ensure that $\Phi(a^* + \varepsilon_{a^*}, b, z+\varepsilon_z) = 0$. Expanding $\Phi(a^*+\varepsilon_{a^*},b,z + \varepsilon_z)$ in a first-order Taylor series,

\begin{equation}
	\begin{split}
		&\Phi(a^* + \varepsilon_{a^*}, b, z + \varepsilon_z) = \Phi(a^*,b,z) + \frac{\partial \Phi}{\partial a^*} \varepsilon_{a^*} + \frac{\partial \Phi}{\partial z} \varepsilon_z \\[1ex]  &\quad\quad\quad = \Phi(a^*,b,z) + \left( \frac{\partial \Phi}{\partial a^*} \frac{\partial a^*}{\partial z} + \frac{\partial \Phi}{\partial z} \right) \varepsilon_z
	\end{split}
\end{equation}\\
\noindent Since $\Phi(a^* + \varepsilon_{a^*}, b, z + \varepsilon_z) = \Phi(a^*,b,z) = 0$ and $\varepsilon_z$ is arbitrary, it must be that

\begin{equation}
	\label{eq:zero movement PDE}
	 \frac{\partial \Phi}{\partial a^*} \frac{\partial a^*}{\partial z} + \frac{\partial \Phi}{\partial z} = 0
\end{equation}\\
\noindent Eliminating $\partial \Phi / \partial a^*$ from (\ref{eq:4 alpha Buchholz}) and (\ref{eq:zero movement PDE}) yields

\begin{equation}
	\label{eq:del_a/del_z}
	\frac{\partial a^*}{\partial z} = \left( \frac{a^*}{b} \right)^2 z^b e^{-z} \frac{\Phi^2(a^*+1,b+1,z)}{\int_0^z t^{b-1} e^{-t} \Phi^2(a^*,b,t)dt}
\end{equation}\\
Notice that $\partial a^* / \partial z$ is always nonnegative, indicating that as $z$ increases, $a^*$ also increases (moves closer to the origin).

Additional insights are gained when we consider the asymptotic behavior of $\partial a^* / \partial z$ as $z \rightarrow 0$ and $z \rightarrow \infty$. From \cite[Eq. (13.5.5)]{Abramowitz}, $\Phi(a^*,b,z) \rightarrow 1$ as $z \rightarrow 0$, which simplifies the integral in (\ref{eq:del_a/del_z}) to $\int_0^z t^{b-1}e^{-t} dt = \gamma(b,z) = b^{-1}z^{b} \Phi(b,1+b,-z)$ \cite[Eq. (6.5.12)]{Abramowitz}. Therefore,

\begin{equation}
	\lim_{z\rightarrow 0} \left(\frac{\partial a^*}{\partial z}\right) = \lim_{z\rightarrow 0} \left( \frac{a^*}{b} \right)^2 \frac{z^b e^{-z}}{b^{-1}z^{b} \Phi(b,1+b,-z)} = \frac{(a^*)^2}{b}
\end{equation}\\
Since all zeros must decrease as $z$ decreases, $a^* \rightarrow -\infty$ as $z\rightarrow 0$. If this were not the case, it would imply that there are real, finite solutions to  $\Phi(a^*,b,0) = 0$, which is certainly not true. Thus, $\partial a^* / \partial z \rightarrow \infty$ as $z \rightarrow 0$. Now let's analyze the behavior of $\partial a^* / \partial z$ as $z \rightarrow \infty$. To simplify the analysis, we will determine an upper bound on $\partial a^* / \partial z$ by first deriving a lower bound on the integral in (\ref{eq:del_a/del_z}).

For $a^* < 0$ and $(b,z) > 0$, and with $k=b/2 - a^*$, the following inequality holds

\begin{equation}
	\int\limits_0^z k t^{b-1} e^{-t} \Phi^2(a^*,b,t) dt \geq \int\limits_0^z \left( \frac{k}{t} - \frac{1}{2} \right) e^{-t} t^b \Phi^2(a^*,b,t) dt
\end{equation}\\
Replacing the right-hand side with the result from (\ref{eq:integral 2}) and noting that $\Phi(a^*,b,z) = 0$, we get

\begin{equation}
	\int\limits_0^z t^{b-1} e^{-t} \Phi^2(a^*,b,t) dt \geq \frac{z^{b+1} e^{-z}}{k} \left(\frac{a^*}{b}\right)^2 \Phi^2(a^*+1,b+1,z)
\end{equation}\\
Substituting back into (\ref{eq:del_a/del_z}) yields the upper bound $\partial a^* / \partial z \leq k/z$, which tends to zero as $z \rightarrow \infty$. We can therefore conclude that the qualitative behavior of $a^*(z)$ is as shown in Fig. \ref{fig:example zero trajectories}.

\begin{figure}[!htb]
	\centering
	\includegraphics[scale=0.8]{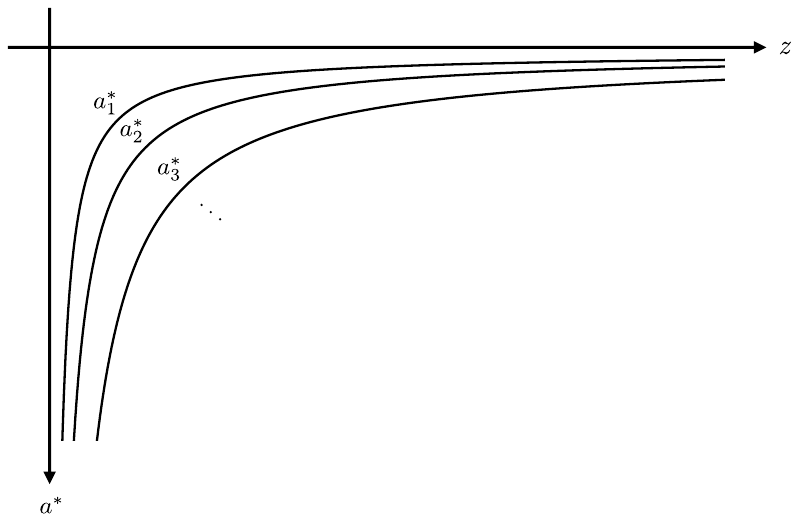}
	\caption{Qualitative depiction of trajectories followed by the zeros $a^*$ as $z$ varies.}
	\label{fig:example zero trajectories}
\end{figure}

The curves depicted in Fig. \ref{fig:example zero trajectories} have two important features. First, none of them intersect, which follows from the fact that all zeros $a^*$ must be simple. Any intersections would imply the existence of zeros with multiplicity greater than one. The second feature is that a given curve $a^*(z)$ is continuous. To show that this must be true, first observe that any point $(a^*,z)$ satisfying $\Phi(a^*,b,z) = 0$ is a regular point because all zeros $a^*$ are simple and thus $\nabla \Phi \neq 0$ at $(a^*,z)$ \cite[Prop. 8.23]{Tu}. This implies that $0$ is a regular value of the map $\Phi : \mathbb{R}^2 \rightarrow \mathbb{R}$ and that the level set $\Phi^{-1}(0)$ is also regular \cite[p. 103]{Tu}. In addition, we note that $\Phi : \mathbb{R}^2 \rightarrow \mathbb{R}$ is a $C^{\infty}$ map since $\Phi(a,b,z)$ is an entire function of $a$ and $z$. The regular level set theorem \cite{Tu} then asserts that $\Phi^{-1}(0)$ must be a regular submanifold of $\mathbb{R}$, i.e., each curve in Fig. \ref{fig:example zero trajectories} must be smooth. These properties of $a^*(z)$ allow us to conclude that (\ref{eq:del_a/del_z}) has a unique, continuous solution and leads to the following proposition.

\begin{proposition}\label{prop:a-domain zero spacing}
	Let $a_{k+1}^* < a_k^*$ be two consecutive zeros of $\Phi(a,b,z_l)$ for the given values $(b, z_l) \in \mathbb{R}^+$. Suppose that $a_k^*$ lies in the interval $[-N,-N+1]$ for some positive integer $N$ so that there is a sequence of $N$ values $0<z_1<z_2 < \cdots < z_N$, of which $z_l$ is a member, that satisfy the equation $\Phi(a_k^*,b,z_n) = 0$, $n=1,\dots,N$. Now let $a^*(z)$ be the solution to the initial value problem (IVP)
	
	\begin{equation}
		\label{eq:IVP}
		\frac{\partial a^*}{\partial z} = \left( \frac{a^*}{b} \right)^2 z^b e^{-z} \frac{\Phi^2(a^*+1,b+1,z)}{\int_0^z t^{b-1} e^{-t} \Phi^2(a^*,b,t)dt}, \quad a^*(z_l) = a^*_{k+1}
	\end{equation}\\
	\noindent Then when $z_l \neq z_N$, $a^*(z_{l+1}) = a_k^*$. If $z_l = z_N$, $a^*(z) \rightarrow -N$ as $z \rightarrow \infty$.
\end{proposition}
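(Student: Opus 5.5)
Proving this comes down to following the zero curves in the $(z,a)$ plane. By the discussion preceding the statement, each zero $a^*$ of $\Phi(a,b,z)$ traces a smooth curve $a^*(z)$ that solves (\ref{eq:IVP}) and is increasing in $z$. These curves never intersect, tend to $-\infty$ as $z\to 0$, and have slope bounded by $k/z$. The solution of the IVP starting at $a^*(z_l)=a^*_{k+1}$ is therefore exactly the zero curve passing through $(z_l,a^*_{k+1})$, and uniqueness comes from the implicit function theorem argument. The task is to show where this curve sits when $z=z_{l+1}$, or as $z\to\infty$.

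\emph{Step 1 (ordering of curves).} At each fixed $z$, label the zeros of $\Phi(\cdot,b,z)$ in decreasing order: $\alpha_1(z)>\alpha_2(z)>\cdots$. Each $\alpha_j(z)$ is continuous. Because curves cannot cross and zeros are simple, the index $j$ of a given curve cannot change as $z$ varies. To see this, note that the count of zeros in $(A,0)$ can change only when a curve passes through $a=A$ or $a=0$. No curve reaches $a=0$, since by Proposition \ref{prop:zeros a_star of Phi} all zeros lie on the negative axis. No curve comes in from $-\infty$ at finite $z$, since the zeros are isolated and depend continuously on $z$ locally. Hence the rank of each curve is preserved.

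\emph{Step 2 (counting via Proposition \ref{prop:number of zeros z_star}).} Fix $a=a_k^*\in[-N,-N+1]$. As $z$ increases, the curves $\alpha_j$ cross the horizontal line $a=a_k^*$ upward, and each crossing happens exactly once because the curves are monotone. These crossings are precisely the zeros $z_1<\cdots<z_N$ of $\Phi(a_k^*,b,\cdot)$, so exactly $N$ curves cross the line. Since the order is preserved, the $n$th crossing is made by a curve whose rank is one lower than the curve making the $(n+1)$st crossing, and consecutive crossings come from consecutive curves. By hypothesis, the curve of rank $k$ crosses at $z_l$ (it passes through $a_k^*$ there). The curve of rank $k+1$ lies just below it, so it crosses the line next, at $z_{l+1}$, provided $l<N$. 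That is, $a^*(z_{l+1})=a_k^*$.

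\emph{Step 3 (the case $z_l=z_N$).} If no further crossing occurs, the curve of rank $k+1$ stays below $a_k^*$ for all $z$. It is increasing and bounded above, so it has a limit $L\le a_k^*$. I would identify $L$ using the large-$z$ asymptotics $\Phi(a,b,z)\sim \Gamma(b)e^zz^{a-b}/\Gamma(a)$ together with the count $N=-[a]$. At every $a$ just above $-N$, the $N$th positive zero in $z$ exists and is finite. At $a\le -N$ in $(-N-1,-N]$, the count increases to $N+1$. I would argue that the limits of the curves are the points where $1/\Gamma(a)=0$, namely the nonpositive integers, and that the curve of rank $k+1$ must converge to $-N$ because of the counting. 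The alternative is that it converges to some $L\in(-N,a_k^*]$. In that case, choosing $a\in(L,a_k^*)$ would give an extra zero count beyond $N$, contradicting Proposition \ref{prop:number of zeros z_star}. If instead $L<-N$, the line $a=-N+\epsilon$ would be crossed by one fewer curve than required.

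I expect Step 3 to be the main obstacle: it requires a rigorous counting argument that pins the limit to exactly $-N$. Step 1's rank-preservation also needs care at the endpoints $a_k^*=-N$ or $-N+1$, where $[a]$ jumps.
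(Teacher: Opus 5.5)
Your Steps 1--2 are a careful version of what the paper does only pictorially (it reads the case $z_l\neq z_N$ off Fig.~\ref{fig:zero movement}), and the overall strategy is right. However, the counting in Step 3 is inverted: as written, neither of your two cases produces a contradiction. When $z_l=z_N$, the curves that ever reach the level $a_k^*$ are exactly those of rank $1,\dots,k$. The rank-$(k+1)$ curve never does, higher ranks lie below it, and there are $N$ such curves. If $L\in(-N,a_k^*]$ and you test at $a\in(L,a_k^*)$, the rank-$(k+1)$ curve stays below $L<a$ and never crosses. So exactly $N$ curves cross, which is precisely what $N=-[a]$ predicts. Likewise, if $L<-N$, the level $a=-N+\epsilon$ is crossed by exactly $N$ curves, again as required.

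The test levels must sit on the other side.
\begin{itemize}
\item For $L>-N$, take $a\in(-N,L)$. The rank-$(k+1)$ curve, rising from $-\infty$ to $L$, now crosses in addition to the $N$ curves of rank $\le k$. That gives $N+1$ zeros where Proposition~\ref{prop:number of zeros z_star} allows $N$.
\item For $L<-N$, take $a\in(\max(L,-N-1),-N)$. The proposition requires $N+1$ zeros, but only the $N$ curves of rank $\le k$ reach $a$.
\end{itemize}
Relatedly, at $a=-N$ itself the count is $N$, not $N+1$, since $[-N]=-N$; the jump occurs only on the open interval $(-N-1,-N)$.

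With this repair your argument is actually stronger than the paper's. The paper locates the limit via $\Phi\sim\Gamma(b)e^{z}z^{a-b}/\Gamma(a)$, which needs uniformity in $a$ to be made rigorous. It then excludes only integers greater than $-N$ and never rules out convergence to $-N-1,-N-2,\dots$. Your two-sided count pins $L=-N$ from Proposition~\ref{prop:number of zeros z_star} and the ordering of the curves alone, so the Gamma-function asymptotics become unnecessary.

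Two smaller points need tightening.
\begin{itemize}
\item Uniqueness of the IVP solution, and hence its identification with the zero curve, follows from the right-hand side being locally Lipschitz: it is real-analytic for $z>0$ with a positive denominator. It does not follow from the implicit function theorem, which only says the zero set is locally a curve.
\item ``Each curve crosses once'' needs strict monotonicity. This holds because $\Phi(a^*+1,b+1,z)=(b/a^*)\,\partial_z\Phi(a^*,b,z)$ cannot vanish at a zero of $\Phi$; otherwise a nontrivial solution of Kummer's equation would have a double $z$-zero.
\end{itemize}
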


\begin{proof}
	Let's focus first on the case where $z_l \neq z_N$. Consider the diagram in Fig. \ref{fig:zero movement}, showing the trajectories of two consecutive zeros as a function of $z$. Notice that when $z$ increases from $z_l$ to $z_{l+1}$, the curve $a^*(z)$ increases from $a_{k+1}^*$ to $a_k^*$. Since the evolution of $a^*(z)$ is governed by the differential equation in (\ref{eq:del_a/del_z}), it must be then that $a_k^*$ is the solution to the IVP in (\ref{eq:IVP}) at $z=z_{l+1}$.
	
	\begin{figure}[!htb]
		\centering
		\includegraphics[scale=0.7]{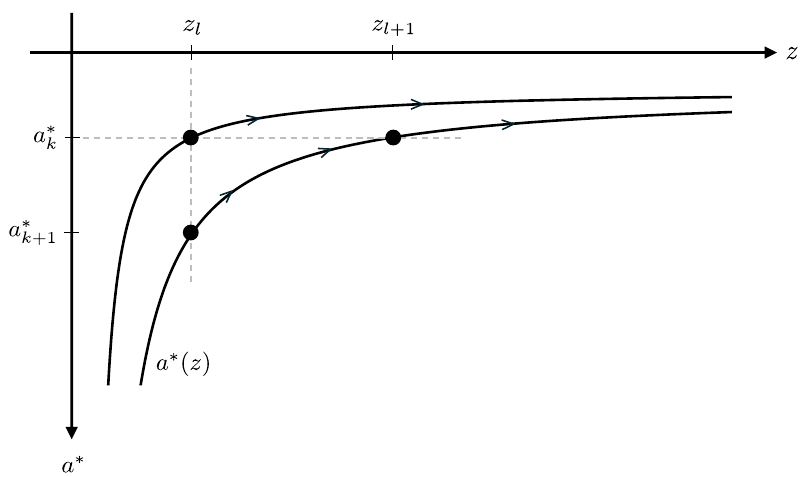}
		\caption{Trajectories of two consecutive zeros.}
		\label{fig:zero movement}
	\end{figure}
	
	Now suppose that $z_l = z_N$, so that no zero $z > z_N$ exists for which $\Phi(a_k^*,b,z) = 0$. That is, there is no amount of increase in $z$ such that $a^*(z) = a_k^*$. The limiting position of $a^*(z)$ can be determined by analyzing the asymptotic behavior of $a^*(z)$ for large $z$. From \cite[Eq. (13.1.4)]{Abramowitz}, we have for $z>>1$,
	
	\begin{equation}
		\Phi(a,b,z) = \frac{\Gamma(b)}{\Gamma(a)} e^z z^{a-b} [1+\mathcal{O}(z^{-1})]
	\end{equation}\\
	\noindent As $z$ grows larger, the only way for $\Phi(a,b,z)$ to vanish is for $\Gamma(a)$ to also grow large, which occurs as $a$ approaches a pole of the gamma function at one of the negative integers. This implies that when $z_l = z_N$, so that $a_k^* \in [-N,-N+1]$ for some positive integer $N$, $a^*(z)$ will approach $-N$ as $z \rightarrow \infty$. It is not possible for $a^*(z)$ to settle at some other integer greater than $-N$, since this would require $a^*(z)$ to pass through $a_k^*$ for some finite $z$, violating the fact that there is no $z>z_l$ for which $a_k^*$ is a zero.
\end{proof}

Proposition \ref{prop:a-domain zero spacing} provides a link between the spacing of zeros in the $z$-domain and the spacing of zeros in the $a$-domain, and enables us to determine a lower bound on $a_{k}^* - a_{k+1}^*$ through Proposition \ref{prop:z-domain zero spacing}. That is, by solving (\ref{eq:IVP}) up to $\bar{z} < z_{l+1}$ (if $z_{l+1}$ exists, otherwise we only require $\bar{z}<\infty$), the resulting solution $\bar{a}_k^*$ will be less than $a_k^*$, which implies that $\bar{a}_k^* - a_{k+1}^* < a_k^* - a_{k+1}^*$. However, because (\ref{eq:IVP}) has no analytical solution, an explicit expression cannot be written for $\bar{a}_k$. This makes it difficult to formulate general statements about the behavior of $\bar{a}_k^* - a_{k+1}^*$ and limits the utility of the bound. We therefore seek to approximate (\ref{eq:IVP}) so that an analytic solution is achievable.

\section{Determining an Analytic Bound}\label{sec:analytic bound}

We first leverage the following comparison theorem, proved in \cite{Budincevic}.

\begin{proposition}\footnote[4]{Reference \cite{Budincevic} provides a weaker version of this theorem when certain uniqueness or Lipschitz continuity conditions are met, but it is not required for our purposes.}\label{prop:comparison theorem} Suppose that the functions $f(t,y)$ and $h(t,y)$ are continuous in the domain
	
	\begin{equation*}
		D = \left\{ (t,y) : \lvert t - t_0 \rvert < c, \lvert y - y_0 \rvert < d \right\},
	\end{equation*}\\	
	\noindent and denote by $y(t)$, $v(t)$ any solution of the IVPs\\[-1.5ex]
	
	\quad (1) $y^{\prime}(t) = f(t,y), y(t_0) = y_0$ \\[-1.5ex]
		
	\quad (2) $v^{\prime}(t) = h(t,v), v(t_0) = y_0$ \\[-1.5ex]
	
	\noindent respectively. If $h(t,y) < f(t,y)$ in $D$, then $v(t) < y(t)$ for $t>t_0$.
\end{proposition}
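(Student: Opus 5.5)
The plan is to argue directly on the difference $w(t) = y(t) - v(t)$, for $t$ in the common interval $[t_0, t_0 + c)$ on which both solutions exist and whose graphs remain in $D$. The strict inequality $h < f$ is what drives the argument, so no Lipschitz or uniqueness hypothesis should be needed. This is consistent with the footnote: any solutions $y$, $v$ may be taken, since only the values of $f$ and $h$ along the graphs will be used. Both $y$ and $v$ are differentiable with continuous derivatives, because $f$ and $h$ are continuous and the graphs lie in $D$. Hence $w$ is $C^1$ and $w(t_0) = 0$.

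\emph{Step 1 (behaviour just after $t_0$).} We compute
\begin{equation*}
w'(t_0) = f(t_0, y_0) - h(t_0, y_0) > 0 .
\end{equation*}
Since $w(t_0) = 0$ and $w'(t_0) > 0$, the definition of the derivative gives some $\delta > 0$ with $w(t) > 0$ for $t \in (t_0, t_0 + \delta)$. So the claim holds on a right neighbourhood of $t_0$.

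\emph{Step 2 (no later crossing).} Suppose, for contradiction, that $w(t) \le 0$ for some $t > t_0$ in the interval. Let
\begin{equation*}
t_1 = \inf\{ t > t_0 : w(t) \le 0 \}.
\end{equation*}
By Step 1, $t_1 \ge t_0 + \delta > t_0$. By continuity of $w$, $w(t_1) = 0$ and $w > 0$ on $(t_0, t_1)$. Consequently, for $t < t_1$ close to $t_1$,
\begin{equation*}
\frac{w(t_1) - w(t)}{t_1 - t} < 0 ,
\end{equation*}
and letting $t \to t_1^-$ gives $w'(t_1) \le 0$. On the other hand, $y(t_1) = v(t_1)$, so evaluating the two differential equations at the common point $(t_1, y(t_1)) \in D$ gives
\begin{equation*}
w'(t_1) = f(t_1, y(t_1)) - h(t_1, y(t_1)) > 0 .
\end{equation*}
This contradicts $w'(t_1) \le 0$. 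Therefore $w(t) > 0$, that is $v(t) < y(t)$, for all $t > t_0$ at which both solutions are defined in $D$.

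The only delicate point is Step 2, specifically making sure the first touching point $t_1$ is well defined and strictly greater than $t_0$. This is exactly why Step 1 is done first, and why the infimum is taken over the closed condition $w \le 0$ rather than $w < 0$. Once that is secured, the contradiction comes purely from comparing the sign of the one-sided difference quotient with the strict inequality $h < f$ evaluated at a single point. No integral estimates or Gronwall-type arguments are required.
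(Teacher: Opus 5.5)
Your proof is correct. The paper gives no proof of this proposition and simply defers to \cite{Budincevic}. Your first-touching-point argument therefore supplies a self-contained proof of the strict comparison theorem. In it, $w=y-v$ is positive just after $t_0$ because $w'(t_0)=f(t_0,y_0)-h(t_0,y_0)>0$. At the first zero $t_1>t_0$ of $w$, the left difference quotient gives $w'(t_1)\le 0$, while the two equations give $w'(t_1)=f(t_1,y(t_1))-h(t_1,y(t_1))>0$, a contradiction. This is the standard argument for this result, and it confirms the footnote's claim that no uniqueness or Lipschitz hypothesis is needed.

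Your version also makes explicit two points that the paper's statement leaves implicit. First, the conclusion holds only on the intersection of the existence intervals of $y$ and $v$ (with graphs in $D$), which can be shorter than $[t_0,t_0+c)$. Your opening sentence should refer to that intersection, as your closing sentence already does. Second, you use $h<f$ only at $(t_0,y_0)$ and at a putative touching point $(t_1,y(t_1))$, i.e.\ only on the graph of $y$, not on all of $D$. That weaker hypothesis is the form in which the paper can actually check the comparison in Theorem~\ref{primary theorem}. There, the inequality between $h$ and $f$ rests on Lemma~\ref{lem:integral_bound}, which is established only at points where $\Phi(a^*,b,z)=0$, i.e.\ along the true zero trajectory that plays the role of $y$.
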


\noindent Let $f(a^*,z)$ be the right-hand side of the ODE in (\ref{eq:IVP}), i.e.,

\begin{equation}
	\label{eq:del_a/del_z copy}
	f(a^*,z) = \left( \frac{a^*}{b} \right)^2 z^b e^{-z} \frac{\Phi^2(a^*+1,b+1,z)}{\int_0^z t^{b-1} e^{-t} \Phi^2(a^*,b,t)dt}
\end{equation}\\
\noindent Proposition \ref{prop:comparison theorem} says that if $f(a^*,z)$ is replaced with a lower bound $h(a^*,z)$, the resulting solution $\bar{a}^*(z)$ to the IVP will be less than $a^*(z)$ for all $z>z_l$. One way to obtain $h(a^*,z)$ is to upper bound the integral in (\ref{eq:del_a/del_z copy}). To accomplish this, let's first develop an alternative expression for the integral.

\begin{proposition} Consider real parameters $a^*<0$, $b>0$ and $z>0$ such that $\Phi(a^*,b,z) = 0$. Then the integral $I = \int_0^z t^{b-1} e^{-t} \Phi^2(a^*,b,t) dt$ can also be written as
	
	\begin{equation}
		\label{eq:integral recursion}
		I = \frac{(b-1)^2}{-1-a^*+b} \int\limits_0^z t^{b-2} e^{-t} \Phi^2(a^*, b-1, t) dt
	\end{equation}
\end{proposition}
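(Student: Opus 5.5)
The plan is to express $\Phi(a^*,b-1,t)$ through $\Phi(a^*,b,t)$ and its derivative, substitute into the right-hand integral of (\ref{eq:integral recursion}), and reduce everything to $I$ by integrating by parts with Kummer's equation. The boundary terms will vanish because $\Phi(a^*,b,z)=0$. Throughout I abbreviate $\Phi(t)=\Phi(a^*,b,t)$, with primes denoting $d/dt$, and set
\[
J=\int_0^z t^{b-2}e^{-t}\Phi^2(a^*,b-1,t)\,dt .
\]
I will assume $b>1$, which is needed for $J$ to converge at $t=0$ and for the boundary terms at $0$ to vanish.

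First I use the standard differentiation formula
\[
\frac{d}{dt}\bigl[t^{b-1}\Phi(a,b,t)\bigr]=(b-1)\,t^{b-2}\Phi(a,b-1,t),
\]
which follows termwise from the series (\ref{eq:confluent hypergeometric function}) because $(b-1+n)/(b-1)_{n+1}\cdot(b-1)=1/(b)_n\cdot$ the appropriate factor, i.e. $(b-1)_n/(b)_n=(b-1)/(b-1+n)$. It gives
\[
\Phi(a^*,b-1,t)=\Phi(t)+\frac{t}{b-1}\Phi'(t).
\]
Squaring and inserting into $J$ yields three integrals:
\[
J=\int_0^z t^{b-2}e^{-t}\Phi^2\,dt+\frac{1}{b-1}\int_0^z t^{b-1}e^{-t}(\Phi^2)'\,dt+\frac{1}{(b-1)^2}\int_0^z t^{b}e^{-t}(\Phi')^2\,dt .
\]

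For the third integral I integrate by parts once and use Kummer's equation $t\Phi''+(b-t)\Phi'-a^*\Phi=0$ in the form
\[
\bigl(t^be^{-t}\Phi'\bigr)'=t^{b-1}e^{-t}\bigl(t\Phi''+(b-t)\Phi'\bigr)=a^*t^{b-1}e^{-t}\Phi .
\]
The boundary term $t^be^{-t}\Phi\Phi'$ vanishes at $t=z$ because $\Phi(z)=0$, and at $t=0$ because of the factor $t^b$. Hence the third integral equals $-a^*I$.

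For the second integral I integrate by parts directly. The boundary term $t^{b-1}e^{-t}\Phi^2$ vanishes at $z$ (again since $\Phi(z)=0$) and at $0$ (since $b>1$). What remains is
\[
-\int_0^z\bigl((b-1)t^{b-2}-t^{b-1}\bigr)e^{-t}\Phi^2\,dt=-(b-1)\int_0^z t^{b-2}e^{-t}\Phi^2\,dt+I .
\]
After multiplying by $1/(b-1)$, its first piece exactly cancels the first integral in $J$. Collecting terms gives
\[
J=\frac{I}{b-1}-\frac{a^*I}{(b-1)^2}=\frac{b-1-a^*}{(b-1)^2}\,I ,
\]
which rearranges to (\ref{eq:integral recursion}); note $b-1-a^*>0$ is automatic when $a^*<0<b-1$.

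The computation is otherwise routine. The main point to watch is the boundary terms at $t=0$: the argument as planned requires $b>1$. For $0<b\le 1$ the integral on the right of (\ref{eq:integral recursion}) diverges unless interpreted by continuation, so I would state the identity for $b>1$ and remark that this is the regime in which it will be applied.
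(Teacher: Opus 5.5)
Your proof is correct, and it takes a genuinely different route from the paper's. The paper works entirely with contiguous relations and closed-form product integrals. It replaces one factor of $\Phi(a^*,b,t)$ in the integrand using relation (13.4.3) of Abramowitz--Stegun and splits $I=I_1+I_2$. It then rewrites $I_2$ via (13.4.4) as $\frac{(b-1)^2}{-1-a^*+b}$ times the target integral, minus a product integral at parameter $b-1$. Both product integrals are evaluated with (\ref{eq:integral 1}) of Proposition \ref{prop:definite integrals}. Using $\Phi(a^*,b,z)=0$, the two closed-form pieces reduce to $\pm\frac{(a^*)^2e^{-z}z^{b-1}}{-1-a^*+b}\Phi^2(a^*+1,b,z)$, which cancel.

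You need only the derivative identity $\frac{d}{dt}\bigl[t^{b-1}\Phi(a,b,t)\bigr]=(b-1)t^{b-2}\Phi(a,b-1,t)$ and the Sturm--Liouville form $\bigl(t^be^{-t}\Phi'\bigr)'=a^*t^{b-1}e^{-t}\Phi$ of Kummer's equation. Every boundary term is visibly zero, and nothing rests on the Whittaker-function integrals of Appendix \ref{Appendix A}. Your argument is therefore shorter and more self-contained. The paper's route instead buys the explicit split $I=I_1+I_2$ with $I_1$ in closed form, which Lemma \ref{lem:integral_bound} reuses before applying Cauchy--Schwarz to $I_2$.

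Your restriction to $b>1$ is not a weakness relative to the paper. Its proof applies (\ref{eq:integral 1}) with $b-1$ in place of $b$, which requires $b-1>0$. For $0<b<1$ the right-hand side of (\ref{eq:integral recursion}) genuinely diverges while $I$ is finite, so the proposition as printed holds only for $b>1$. Your guess that this is the regime of application is not borne out, though. Through Lemma \ref{lem:integral_bound} and Theorem \ref{primary theorem}, the paper later uses the identity for $b=m/2$ with $m=1$.
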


\begin{proof}
\cite[Eq. (13.4.3)]{Abramowitz}	gives the recurrence relation

\begin{equation}
	\Phi(a,b,t) = \frac{a}{1 + a - b} \Phi(a+1,b,t) - \frac{b-1}{1+a-b} \Phi(a,b-1,t)
\end{equation}\\
\noindent Substituting into the definition of $I$ yields

\begin{equation}
	\label{eq:I integral}
	\begin{split}
		I &= \frac{a}{1+a-b} \int\limits_0^z t^{b-1} e^{-t} \Phi(a,b,t) \Phi(a+1,b,t) dt \\[1ex] &+ \frac{b-1}{-1-a+b} \int\limits_0^z t^{b-1} e^{-t} \Phi(a,b,t) \Phi(a,b-1,t) dt
	\end{split}
\end{equation}\\
\noindent Let's write (\ref{eq:I integral}) as $I = I_1 + I_2$. Using (\ref{eq:integral 1}) with $\xi = a$, $\eta = a + 1$, and the fact that $\Phi(a,b,z) = 0$, it is straightforward to show that

\begin{equation}
	\label{eq:I_1 integral}
	I_1 = \frac{a^2 e^{-z} z^{b-1}}{-1-a+b} \Phi^2(a+1,b,z)
\end{equation}\\
\noindent For $I_2$, use \cite[Eq. (13.4.4)]{Abramowitz} to write

\begin{equation}
	\Phi(a,b,t) = \frac{b-1}{t} \Phi(a,b-1,t) - \frac{b-1}{t} \Phi(a-1,b-1,t)
\end{equation}\\
\noindent which results in

\begin{equation}
	\label{eq:I_2 integral}
	\begin{split}
		I_2 &= \frac{(b-1)^2}{-1-a+b} \int\limits_0^z t^{b-2} e^{-t} \Phi^2(a,b-1,t) dt\\[1ex] &- \frac{(b-1)^2}{-1-a+b} \int\limits_0^z t^{b-2} e^{-t} \Phi(a-1,b-1,t) \Phi(a,b-1,t) dt
	\end{split}
\end{equation}\\
\noindent Applying (\ref{eq:integral 1}) to the second integral in (\ref{eq:I_2 integral}) with $\xi=a-1$, $\eta=a$ and again using the fact that $\Phi(a,b,z) = 0$, we get

\begin{equation}
	\label{eq:I_2 integral simplified}
	I_2 = \frac{(b-1)^2}{-1-a+b} \int\limits_0^z t^{b-2} e^{-t} \Phi^2(a,b-1,t) dt - \frac{a^2 e^{-z} z^{b-1}}{-1-a+b} \Phi^2(a+1,b,z)
\end{equation}\\
\noindent Substituting (\ref{eq:I_1 integral}) and (\ref{eq:I_2 integral simplified}) into (\ref{eq:I integral}), and replacing $a$ with $a^*$ yields the desired result.
\end{proof}

\begin{lemma}\label{lem:integral_bound} Consider real parameters $a<0$, $b>0$ and $z>0$ such that $\Phi(a,b,z) = 0$ and $z<-1-a+b$. Then an upper bound on the integral $I=\int_0^z t^{b-1} e^{-t} \Phi^2(a,b,t)dt$ is
	
\begin{equation}
	\label{eq:integral bound}
	I\leq \frac{a^2 e^{-z} z^{b-1} \Phi^2(a+1,b,z)}{-1 - a + b - \sqrt{z(-1-a+b)}}
\end{equation}
\end{lemma}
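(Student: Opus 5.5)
The plan is to turn the recursion (\ref{eq:integral recursion}) into an identity for $I$ in terms of $\Phi(a,b,t)$ and its $t$-derivative only. Then the boundary term $a^2 e^{-z} z^{b-1}\Phi^2(a+1,b,z)$ appears explicitly, and whatever remains is controlled by Cauchy--Schwarz. Write $\beta=-1-a+b>0$. The recurrence \cite[Eq. (13.4.3)]{Abramowitz} used in the previous proof gives
\begin{equation*}
(b-1)\Phi(a,b-1,t)=a\Phi(a+1,b,t)+\beta\Phi(a,b,t).
\end{equation*}
Combined with the standard relation $a\Phi(a+1,b,t)=t\Phi'(a,b,t)+a\Phi(a,b,t)$ (prime meaning $\partial/\partial t$), this becomes $(b-1)\Phi(a,b-1,t)=t\Phi'+(b-1)\Phi$. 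Substituting into (\ref{eq:integral recursion}) gives
\begin{equation*}
\beta I=\int_0^z t^{b-2}e^{-t}\bigl(t\Phi'(a,b,t)+(b-1)\Phi(a,b,t)\bigr)^2dt .
\end{equation*}
At $t=z$ we have $\Phi(a,b,z)=0$, so $a\Phi(a+1,b,z)=z\Phi'(a,b,z)$. Hence the target numerator is $a^2e^{-z}z^{b-1}\Phi^2(a+1,b,z)=e^{-z}z^{b+1}\Phi'(z)^2$.

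Next I would expand the square and integrate by parts. The cross term $2(b-1)\int t^{b-1}e^{-t}\Phi\Phi'\,dt=(b-1)\int t^{b-1}e^{-t}(\Phi^2)'dt$ integrates by parts with no boundary contribution, because $\Phi(z)=0$ and $t^{b-1}\to0$ or the integrand stays integrable at $0$. This produces $-(b-1)^2\int t^{b-2}e^{-t}\Phi^2+(b-1)I$, which cancels the $(b-1)^2\Phi^2$ term and leaves
\begin{equation*}
\beta I=\int_0^z t^{b}e^{-t}\Phi'^2\,dt+(b-1)I .
\end{equation*}
That is, $(\beta-b+1)I=-aI=\int_0^z t^be^{-t}\Phi'^2dt$. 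This is the crucial intermediate identity, and I would double-check it carefully. It can also be derived independently from the ODE $t\Phi''+(b-t)\Phi'-a\Phi=0$: multiply by $t^{b-1}e^{-t}\Phi$ and integrate.

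Then I would treat $J=\int_0^z t^be^{-t}\Phi'^2dt$ using the ODE in self-adjoint form, $(t^be^{-t}\Phi')'=a\,t^{b-1}e^{-t}\Phi$. Multiplying by $t\Phi'$ and integrating by parts gives a boundary term $z^{b+1}e^{-z}\Phi'(z)^2$ plus integrals of the shape $\int t^{b}e^{-t}\Phi'^2$, $\int t^{b+1}e^{-t}\Phi'^2$ and $\int t^{b}e^{-t}\Phi\Phi'$. The first two reduce to $J$ and to something bounded by $zJ$, since $t\le z$ on the range. The remaining mixed term is bounded by Cauchy--Schwarz: $\bigl|\int t^be^{-t}\Phi\Phi'\bigr|\le \sqrt{z}\,\sqrt{I}\sqrt{J}$, using $t^b=t^{(b-1)/2}t^{b/2}\cdot t^{1/2}$. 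Replacing $J$ by $-aI$ throughout, or equivalently by $\beta I$ after regrouping, yields an inequality of the form
\begin{equation*}
\beta I\le e^{-z}z^{b+1}\Phi'(z)^2+\sqrt{z\beta}\,I ,
\end{equation*}
which rearranges to (\ref{eq:integral bound}). The hypothesis $z<\beta$ is exactly what makes $\beta-\sqrt{z\beta}>0$, so that one may divide without reversing the inequality.

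The main obstacle is this last step. I must choose the multiplier and the splitting so that the Cauchy--Schwarz term comes out with precisely the coefficient $\sqrt{z\beta}$ and the other terms have the right signs. I would first try the multiplier $t\Phi'$ above. If the constants do not match, I would instead apply Cauchy--Schwarz directly to the cross term of the expanded square $\int t^{b-2}e^{-t}(t\Phi'+(b-1)\Phi)^2$ before integrating by parts, and tune the split using the identity $J=-aI$.
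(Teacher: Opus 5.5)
The preliminary identities in your proposal are right: $\beta I=\int_0^z t^{b-2}e^{-t}\bigl(t\Phi'+(b-1)\Phi\bigr)^2dt$, the rewriting of the numerator as $N:=z^{b+1}e^{-z}\Phi'(z)^2$, and $J=-aI$. (For $b<1$ the first integral diverges at $t=0$, a restriction the paper's recursion (\ref{eq:integral recursion}) shares.) The gap is the last step, which is the whole content of the lemma, and the mechanism you propose there fails. The $t\Phi'$ multiplier gives exactly
\begin{equation*}
N=2a\int_0^z t^be^{-t}\Phi\Phi'\,dt+(1-b)J+\int_0^z t^{b+1}e^{-t}\Phi'^2\,dt .
\end{equation*}
You need a \emph{lower} bound on $N$, and each piece works against you:
\begin{itemize}
\item the estimate $\int_0^z t^{b+1}e^{-t}\Phi'^2dt\le zJ$ points the wrong way, so only $\ge 0$ is usable;
\item $(1-b)J=-(b-1)\lvert a\rvert I$;
\item your Cauchy--Schwarz bound yields only $2a\int_0^z t^be^{-t}\Phi\Phi'\,dt\ge-2\lvert a\rvert^{3/2}\sqrt{z}\,I$.
\end{itemize}
The outcome is $N\ge-\bigl[(b-1)\lvert a\rvert+2\lvert a\rvert^{3/2}\sqrt z\bigr]I$, which is nowhere near $(\beta-\sqrt{z\beta})I$. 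At fixed $z$ the loss grows like $\lvert a\rvert^{3/2}$ instead of $\lvert a\rvert^{1/2}$. The reason is that $\int_0^z t^be^{-t}\Phi\Phi'\,dt=-\tfrac b2 I+\tfrac12\int_0^z t^be^{-t}\Phi^2dt$ is a large term that must cancel exactly against the others, not be estimated on its own. Your fallback does not rescue this. The expanded square contains no boundary term, so Cauchy--Schwarz on its cross term cannot produce $N$ at all; that expansion only encodes $J=-aI$.

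The missing idea is an exact splitting of $\beta I$ into the boundary term plus a single remainder integral:
\begin{equation*}
\beta I=z^{b+1}e^{-z}\Phi'(z)^2+\int_0^z t^{b-1}e^{-t}\,\Phi(a,b,t)\bigl(t\Phi'(a,b,t)+(b-1)\Phi(a,b,t)\bigr)\,dt .
\end{equation*}
In your notation this says $N=J-\int_0^z t^be^{-t}\Phi\Phi'\,dt$. The paper obtains it from the recurrence \cite[Eq. (13.4.3)]{Abramowitz}, written as $\beta\Phi(a,b,t)=(b-1)\Phi(a,b-1,t)-a\Phi(a+1,b,t)$. The piece $-a\int_0^z t^{b-1}e^{-t}\Phi(a,b,t)\Phi(a+1,b,t)\,dt$ is evaluated in closed form by (\ref{eq:integral 1}) and equals $N$; this is $\beta I_1$. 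Only then is Cauchy--Schwarz applied, to the remainder as a whole. Pair $t^{(b-1)/2}e^{-t/2}\Phi$ with $t^{(b-1)/2}e^{-t/2}\bigl(t\Phi'+(b-1)\Phi\bigr)$, then use $t\le z$ and your own first identity. This bounds the remainder by $\sqrt{I}\sqrt{z\beta I}=\sqrt{z\beta}\,I$, which is exactly $\beta I\le N+\sqrt{z\beta}\,I$; the hypothesis $z<\beta$ then lets you divide.
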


\begin{proof}
	Return to (\ref{eq:I integral}) and substitute the result from (\ref{eq:I_1 integral}) for $I_1$. Then
	
	\begin{equation}
		\label{eq:bound on I}
		I \leq \frac{a^2 e^{-z} z^{b-1}}{-1-a+b} \Phi^2(a+1,b,z) + J
	\end{equation}\\	
	\noindent with
	
	\begin{equation}
		J = \left \lvert \frac{b-1}{-1-a+b} \int\limits_0^z \left[t^{(b-1)/2}e^{-t/2} \Phi(a,b,t)\right] \left[t^{(b-1)/2}e^{-t/2} \Phi(a,b-1,t)\right] dt \right \rvert
	\end{equation}\\	
	\noindent Use the Cauchy-Schwarz inequality to write
	
	\begin{equation}
		\label{eq:Cauchy Schwarz}
		\begin{split}
			J &\leq \left[ \left(\frac{b-1}{-1-a+b}\right)^2 \int\limits_0^z t^{b-1} e^{-t} \Phi^2(a,b,t) dt \right]^{1/2} \left[ \int\limits_0^z t^{b-1} e^{-t} \Phi^2(a,b-1,t) dt \right]^{1/2}\\[1ex] &= \sqrt{J_1} \cdot \sqrt{J_2}
		\end{split}
	\end{equation}\\	
	\noindent The first integral $J_1$ is recognized as $\left(\frac{b-1}{-1-a+b}\right)^2 I$. For $J_2$, first write it as
	
	\begin{equation}
		J_2 = \int\limits_0^z t \cdot t^{b-2} e^{-t} \Phi^2(a,b-1,t) dt
	\end{equation}\\	
	\noindent Given that $z>0$ and that the function $t$ is monotonic over the interval $[0,z]$, an upper bound on $J_2$ is \cite{Underhill}
	
	\begin{equation}
		J_2 \leq z \int\limits_0^z t^{b-2} e^{-t} \Phi^2(a,b-1,t)dt \leq z \frac{-1-a+b}{(b-1)^2} I
	\end{equation}\\	
	\noindent where we used the result from (\ref{eq:integral recursion}). Therefore, after substituting back into (\ref{eq:Cauchy Schwarz}), we get
	
	\begin{equation}
		\label{eq:J bound}
		J \leq I \sqrt{\left(\frac{b-1}{-1-a+b}\right)^2} \sqrt{z\frac{-1-a+b}{(b-1)^2}} = I \sqrt{\frac{z}{-1-a+b}}
	\end{equation}\\	
	\noindent Substituting (\ref{eq:J bound}) back into (\ref{eq:bound on I}) yields the inequality
	
	\begin{equation}
		I \leq \frac{a^2 e^{-z} z^{b-1}}{-1-a+b} \Phi^2(a+1,b,z) + I\sqrt{\frac{z}{-1-a+b}}
	\end{equation}\\	
	\noindent which we can simplify into the form
	
	\begin{equation}
		\label{eq:I bound}
		I \left(1 - \frac{\sqrt{z}}{\sqrt{-1-a+b}}\right) \leq \frac{a^2 e^{-z} z^{b-1}}{-1-a+b} \Phi^2(a+1,b,z)
	\end{equation}\\	
	\noindent Provided that $z < -1-a+b$, rearranging (\ref{eq:I bound}) yields the upper bound in (\ref{eq:integral bound}).
\end{proof}

\subsection{Lower Bound on Zero Separation}

With an upper bound on $I$, we can state the following theorem.
\FirstTheorem*

\begin{proof}
	Substituting the upper bound in (\ref{eq:integral bound}) for the integral in (\ref{eq:del_a/del_z copy}) and using \cite[Eq. (13.4.4)]{Abramowitz}, we get the following lower bound on $f(a^*,z)$
	
	\begin{equation}
		\label{eq:h(a^*,z)}
		h(a^*,z) = \frac{-1-a^*+b}{z} - \frac{\sqrt{-1-a^*+b}}{\sqrt{z}}
	\end{equation}\\
	\noindent Leveraging the comparison theorem in Proposition \ref{prop:comparison theorem}, (\ref{eq:h(a^*,z)}) allows us to consider a much simpler differential equation when analyzing the spacing between consecutive zeros, namely,
	
	\begin{equation}
		\label{eq:del_a_star_del_z}
		\frac{\partial a^*}{\partial z} = \frac{-1-a^*+b}{z} - \frac{\sqrt{-1-a^*+b}}{\sqrt{z}}
	\end{equation}\\
	\indent With the initial condition $a^*(z_l) = a_k^*$, it is straightforward to show using separation of variables that the solution to (\ref{eq:del_a_star_del_z}) is
	
	\begin{equation}
		\label{eq:IVP solution}
		a^*(z) = -1 + b - \frac{1}{4} \left[ \left(\frac{z_l}{z}\right)^{1/2} \left(2\sqrt{-1-a_k^*+b} - \sqrt{z_l} \right) + \sqrt{z} \right]^2
	\end{equation}\\
	\noindent Let $z_{l+1}$ be the next value (assuming it exists) for which $a_{k-1}^*$ is a zero. Then the solution $a^*(z_{l+1})$ to (\ref{eq:IVP solution}) is a lower bound on $a_{k-1}^*$. We want to avoid computing $z_{l+1}$ because this would require us to first compute $a_{k-1}^*$, which nullifies the need to obtain a bound on $a_{k-1}^* - a_k^*$. Recall from the discussion following Proposition \ref{prop:a-domain zero spacing} that if $z_{l+1}$ is replaced with a lower bound $\bar{z}$, then $a^*(\bar{z})$ will be a lower bound on $a_{k-1}^*$. Using Proposition \ref{prop:z-domain zero spacing}, $\bar{z}$ is given by\footnote[5]{Proposition \ref{prop:z-domain zero spacing} requires $b - a^* - 1 > 0$, which is automatically satisfied by the condition $b - a^* - 1 > z$ needed in Lemma \ref{lem:integral_bound}.}
	
	\begin{equation}
		\label{eq:z bar}
		\bar{z} = z_l \exp{\left[ \frac{2\pi}{\sqrt{\left(b - 2 a_{k-1}^*\right)^2 - \left(b-1\right)^2}}  \right]} = z_l g_{k-1}
	\end{equation}\\
	\noindent Prior to substituting $\bar{z}$ for $z$ in (\ref{eq:IVP solution}), note that it is permissible to replace $a_{k-1}^*$ in (\ref{eq:z bar}) with $a_k^*$ since this has the effect of reducing $\bar{z}$. We will perform this replacement because it ensures that $a_k^*$ is the only zero that appears on the right-hand side of (\ref{eq:IVP solution}) and it will also simplify the monotonicity analysis in Section \ref{sec:monotonicity}.
	
	After substituting $\bar{z} = z_l g_k$ for $z$ in (\ref{eq:IVP solution}) and subtracting $a_k^*$ from both sides, we obtain the desired bound on $\Delta a = a_{k-1}^* - a_k^*$
	
	\begin{equation}
		\label{eq:bound on delta a}
		\Delta a \geq \beta_k - \frac{\beta_k}{4 g_k} \left[ 2 + \sqrt{z_l/\beta_k} (g_k - 1) \right]^2
	\end{equation}\\
	\noindent The last step is to prove the condition $z_l < \beta_{k-1}/g_{k-1}$. Recall that a key requirement of the upper bound in (\ref{eq:integral bound}) was that $z<-1-a^*+b$. This inequality must be valid over the entire solution space of the differential equation in (\ref{eq:del_a_star_del_z}). That is, for any $z \in [z_l, z_l g_k]$ and $a^* \in [a_k^*, a_{k-1}^*]$. To ensure that $z < -1-a^*+b$ is satisfied everywhere, replace the left-hand side of the inequality with an upper bound and the right-hand side with a lower bound. Given that $a_k^* < a_{k-1}^* < 0$ and the definition of $g_k$ from (\ref{eq:z bar}), for any $z \in [z_l, z_l g_k]$, $z \leq z_l g_{k-1}$. Similarly, for any $a^* \in [a_k^*, a_{k-1}^*]$, $(-1 - a^* + b) \geq (-1 - a_{k-1}^*+b) = \beta_{k-1}$. Therefore, if $z_l < \beta_{k-1} / g_{k-1}$, the integral upper bound in (\ref{eq:integral bound}) will be valid for all $z$ and $a^*$ in their respective domains.
\end{proof}

\section{Monotonicity of the Bound}\label{sec:monotonicity}

Numerical investigation of (\ref{eq:bound on delta a}) suggests that $\Delta a$ is a monotonically decreasing function of $a_k^*$. It is difficult to prove this statement for all $a_k^*$, but we can derive a tight upper bound $\bar{a}^*$ such that monotonicity holds for $a_k^* < \bar{a}^*$.

\SecondTheorem*

\begin{proof}
	We will show that $d\Delta a / d a_k^* < 0$. Applying the chain rule to (\ref{eq:bound on delta a}) yields
	
	\begin{equation}
		\label{eq:chain rule}
		\frac{d \Delta a}{d a_k^*} = \frac{\partial \Delta a}{\partial \beta_k} \frac{d \beta_k}{d a_k^*} + \frac{\partial \Delta a}{\partial g_k} \frac{dg_k}{d a_k^*} = -\frac{\partial \Delta a}{\partial \beta_k} + \frac{\partial \Delta a}{\partial g_k} \frac{dg_k}{d a_k^*}
	\end{equation}\\
	\noindent With $\eta = \sqrt{z_l/\beta_k}$, it is straightforward to show that
	
	\begin{equation}
		\label{eq:partial derivatives}
		\begin{aligned}
			\frac{\partial \Delta a}{\partial \beta_k} &= 1 - \frac{1}{2g_k} \left[ 2 + \eta (g_k - 1) \right] \\[1ex]
			\frac{\partial \Delta a}{\partial g_k} &= -\frac{\beta_k}{2g_k} \eta \left[ 2 + \eta(g_k - 1) \right] + \frac{\beta_k}{4 g_k^2} \left[ 2 + \eta (g_k - 1) \right]^2
		\end{aligned}
	\end{equation}\\
	\noindent From the definition of $g_k$ in (\ref{eq:z bar}), we also have
	
	\begin{equation}
	\label{eq:dg da_star}
		\frac{d g_k}{d a_k^*} = \frac{4\pi (b - 2 a_k^*)}{\left[ (b - 2a_k^*)^2 - (b - 1)^2 \right]^{3/2}} g_k = \psi g_k
	\end{equation}\\
	\indent Let's define $f = 2 + \eta (g_k - 1)$. Then after substituting (\ref{eq:partial derivatives}) and (\ref{eq:dg da_star}) into (\ref{eq:chain rule}), we get
	
	\begin{equation}
		\label{eq:d del_a da_star}
		\frac{d \Delta a}{d a_k^*} = - \left[1 - \frac{f}{2g_k} + \frac{f}{2g_k} \left(\eta - \frac{f}{2g_k}\right) \beta_k \psi g_k \right]
	\end{equation}\\
	\noindent Notice that $\beta_{k-1} / g_{k-1} \leq \beta_k$, which follows from the fact that $g_{k-1} \geq 1$ and $\beta_{k-1} < \beta_k$ because $a_k^* < a_{k-1}^*$. Therefore, since $z_l < \beta_{k-1} / g_{k-1}$, we also have $z_l < \beta_k$, leading to the conclusion that $0 \leq \eta \leq 1$. With this in mind, we can show that $f/(2g_k) \leq 1$ as follows
	
	\begin{equation}
		\label{eq:f/2g inequality}
		\frac{f}{2g_k} = \frac{1}{g_k} + \frac{\eta}{2} - \frac{\eta}{2g_k} \leq 1 \Rightarrow \frac{1}{g_k} \left(1 - \frac{\eta}{2}\right) \leq 1 - \frac{\eta}{2} \Rightarrow g_k \geq 1
	\end{equation}\\
	\noindent Since $g_k$ is indeed greater than or equal to one, the inequalities in (\ref{eq:f/2g inequality}) are valid, i.e., $f/(2g_k) \leq 1$. From the condition $b-a-1 > 0$ in Proposition \ref{prop:z-domain zero spacing}, we see that $\beta_k > 0$. In addition, for $\psi$ in (\ref{eq:dg da_star}), notice that that the numerator is always positive since $a_k^* <0$ and that the denominator is positive for $a < b-1/2$, which is always true because $b > 0$ and $b - a - 1 > 0$ (i.e., $a < b - 1$ implies $a < b-1/2$). Thus, $\psi$ is also guaranteed to be positive.
	
	Now let's focus on the term $D$ in square brackets in (\ref{eq:d del_a da_star}). One condition for $D$ to be guaranteed positive (and thus $d\Delta a / d a_k^*$ is guaranteed negative) is if
	
	\begin{equation}
		\label{eq:D inequality}
		\left(\eta - \frac{f}{2g_k}\right) \beta_k \psi g_k \geq -1 \Rightarrow \eta \geq 2 \frac{1 - \displaystyle \frac{1}{\beta_k \psi}}{g_k + 1}
	\end{equation}\\
	\noindent We know that $\eta$ must be nonnegative. Thus, (\ref{eq:D inequality}) will always be satisfied when $\beta_k \psi \leq 1$. Substituting the definitions for $\beta_k$ and $\psi$,
	
	\begin{equation}
		\label{eq:beta_k psi}
		\beta_k \psi = \frac{4\pi(-1 - a_k^* + b)(b - 2a_k^*)}{[(b - 2a_k^*) ^ 2 - (b - 1)^2]^{3/2}}
	\end{equation}\\
	\noindent Notice that as $a_k^* \rightarrow -\infty$, $\beta_k \psi \rightarrow 0$. Thus, if we can find the smallest value for $a_k^*$, call it $\bar{a}^*$, for which $\beta_k \psi = 1$, then it must be that $\beta_k \psi < 1$ for all $a_k^* \leq \bar{a}^*$.
	
	To find $\bar{a}^*$, first define $y^2 = (b - 2 \bar{a})^2 - (b-1)^2$ so that
	
	\begin{equation}
		\label{eq:a_bar}
		\bar{a} = \frac{b}{2} \pm \frac{1}{2} \sqrt{(b-1) ^ 2 + y ^ 2}
	\end{equation}\\
	\noindent Focusing on the "$-$" solution in (\ref{eq:a_bar}), substitution into (\ref{eq:beta_k psi}) and rearranging terms yields
	
	\begin{equation}
		\label{eq:intermediate expression}
		\sqrt{(b-1)^2 + y^2} = \frac{\displaystyle \frac{y^3}{2\pi} - (b-1)^2 - y^2}{b-2}
	\end{equation}\\
	\noindent Squaring both sides of (\ref{eq:intermediate expression}) and simplifying yields the sixth-order polynomial
	
	\begin{equation}
		\label{eq:sixth order poly}
		\frac{1}{4\pi^2}y^6 - \frac{1}{\pi} y^5 + y^4 - \frac{(b-1)^2}{\pi} y^3 + (b^2 - 2) y^2 + (b-1)^2(2b-3) = 0
	\end{equation}\\
	\noindent The same polynomial is obtained for the "$+$" solution in (\ref{eq:a_bar}). All six roots can easily be found using routine numerical algorithms and substituted back into (\ref{eq:a_bar}) to determine the corresponding values $\bar{a}$, of which we are only interested in real solutions. Because of the squaring operation between (\ref{eq:intermediate expression}) and (\ref{eq:sixth order poly}), not all of the $\bar{a}$'s satisfy $\beta_k \psi = 1$ and feasibility needs to be verified. Then $\bar{a}^*$ is the minimum of the set of real and feasible $\bar{a}$'s and the theorem is proved.
\end{proof}

\subsection{Discussion}

In this subsection we analyze the behavior of $\bar{a}^*$ as a function of $b$. First, we point out that for $b < 0.32$, $\beta_k \psi$ is always less than $1$, meaning that the bound in (\ref{eq:bound on delta a}) is monotonic over the entire domain of $a_k^*$. This conclusion is reached by determining that there are no solutions $\bar{a}^*$ to $\beta_k \psi = 1$ when $b<0.32$, thereby making it impossible to satisfy the equality constraint in Theorem \ref{secondary theorem}. Figure \ref{fig:large b monotonicity} shows the values for $\bar{a}^*$ for $b \geq 0.32$. The key observation from Fig. \ref{fig:large b monotonicity} is that the critical value $\bar{a}^*$ is relatively small, even for $b$ as large as $10,000$. Thus, the bound in (\ref{eq:bound on delta a}) is monotonic over much of the negative real axis.

\begin{figure}[!htb]
	\centering
	\includegraphics[scale=0.62]{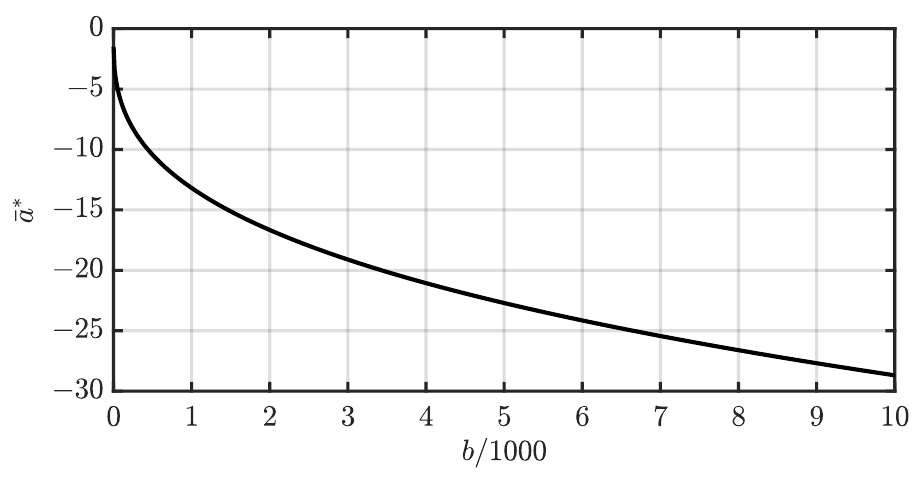}
	\caption{Monotonicity threshold value $\bar{a}^*$ for $b \geq 0.32$.}
	\label{fig:large b monotonicity}
\end{figure}

\section{First Passage Problem}\label{sec:first passage problem}

In this section, we will use the previous results to assess the accuracy of asymptotic approximations for the first passage probability of a Wiener process. For context, a maximum likelihood test was developed in \cite{Vostrikova} to determine when a change in drift has occurred in an $m$-dimensional Wiener process over the dimensionless time interval $[1, n]$\footnote[6]{In \cite{Vostrikova}, the non-dimensional time interval is denoted as $[1, (1-\alpha)^2/\alpha^2]$, where $0<\alpha<0.5$.}. The authors showed that the probability of false alarm, $P_{fa}$, for their test could be expressed in terms of a first passage problem. Specifically, they showed that $P_{fa}$ is equivalent to the probability that the magnitude of a standard, $m$-dimensional Wiener process $\textbf{\emph{x}}(t)$ first crosses a threshold $y\sqrt{t}$ at some time $t \leq n$.

The only way to analytically quantify $P_{fa}$ is as an inverse Laplace transform, i.e., \cite[Eq. (14)]{Vostrikova}

\begin{equation}
	\label{eq:L.T. of Pfa}
	P_{fa} = \frac{\Gamma(m/2, y^2/2)}{\Gamma(m/2)} + \frac{y^m e^{-y^2/2}}{m 2^{m/2-1} \Gamma(m/2)} \mathcal{L}^{-1} \left\{ \frac{\Phi(\nu+1,m/2+1,y^2/2)}{\nu\Phi(\nu, m/2, y^2/2)}\right\}(\ln n)
\end{equation}\\
\noindent An approximate inverse transform is achievable by asymptotically expanding the ratio of hypergeometric functions for large $y$ and retaining the first-order term, resulting in \cite[Eq. (18)]{Vostrikova}

\begin{equation}
	\label{eq:approx Pfa}
	P_{fa} \approx \frac{e^{-y^2/2} y^m}{\Gamma(m/2) 2^{m/2}} \left[\ln n \left(1 - \frac{m}{y^2}\right) + \frac{4}{y^2}\right]
\end{equation}

\subsection{Exact False Alarm Probability in Terms of Residues}
\indent Let the function in curly braces in (\ref{eq:L.T. of Pfa}) be $G(\nu)$ and denote the time-domain variable as $u$. To assess the accuracy of (\ref{eq:approx Pfa}), we will first obtain an exact expression for $P_{fa}$ by evaluating $\mathcal{L}^{-1}\left\{G(\nu)\right\}$ via residues. We prove in Appendix \ref{Appendix B} that the inverse transform can be written as

\begin{equation}
	\label{eq:residues}
	\mathcal{L}^{-1}\left\{G(\nu)\right\} = \text{Res}(e^{\nu u} G, 0) + \sum_{k=1}^{\infty} \text{Res} (e^{\nu u} G, \nu_k^*)
\end{equation}\\
\noindent where $\text{Res}(e^{\nu u} G, \nu_k^*)$ is the residue of $e^{\nu u}G(\nu)$ at the pole $\nu_k^*$ and $\nu_k^* < \cdots < \nu_1^* < 0$ are the nontrivial poles of $G(\nu)$, i.e., the zeros of $\Phi(\nu,m/2,y^2/2)$. Notice that all poles of $e^{\nu u} G$ are real and simple (Proposition \ref{prop:zeros a_star of Phi}).

For the simple pole at $\nu = 0$, we have

\begin{equation}
	\label{eq:residue at zero}
	\begin{aligned}
		\text{Res}(e^{\nu u}G, 0) &= \lim_{\nu\rightarrow 0} \nu e^{\nu u} G = \lim_{\nu\rightarrow 0} \nu e^{\nu u} \frac{G(\nu+1, m/2+1, y^2/2)}{\nu\Phi(\nu, m/2, y^2/2)} \\[1ex] &= \Phi(1, m/2+1, y^2/2)
	\end{aligned}
\end{equation}\\
\noindent Recognizing that $e^{\nu u} G$ is a ratio of functions, the residue for all other simple poles $\nu_k^* \neq 0$ is \cite{Kapoor}

\begin{equation}
	\text{Res}(e^{\nu u}G, \nu_k^*) = \left. \frac{(e^{\nu u}/\nu) \Phi(\nu+1, m/2+1, y^2/2)}{d\Phi(\nu, m/2, y^2/2) / d\nu} \right|_{\nu = \nu_k^*} \;\; , \;\; \nu_k^* \neq 0
\end{equation}\\
\noindent Since $\Phi(\nu_k^*, m/2, y^2/2) = 0$, we can use (\ref{eq:4 alpha Buchholz}) to evaluate $d\Phi/d\nu$, leading to the expression

\begin{equation}
	\label{eq:residue nontrivial poles}
	\text{Res}(e^{\nu u}G, \nu_k^*) = -\frac{(y^2/2)^{m/2} e^{\nu_k^* u}}{(m/2) e^{y^2/2}} \frac{\Phi^2(\nu_k^* + 1, m/2 + 1, y^2/2)}{\displaystyle\int_0^{y^2/2} t^{m/2-1} e^{-t} \Phi^2(\nu_k^*, m/2, t)dt}
\end{equation}\\
\indent Substituting (\ref{eq:residue at zero}) and (\ref{eq:residue nontrivial poles}) back into (\ref{eq:residues}), $P_{fa}$ in (\ref{eq:L.T. of Pfa}) can now be written as

\begin{equation}
	\label{eq:final Pfa}
	P_{fa} = \frac{\Gamma(b, z)}{\Gamma(b)} + \frac{z^b e^{-z}}{b\Gamma(b)} \Phi(1, b+1, z) - \frac{z^{2b} e^{-2z}}{b^2 \Gamma(b)} \sum\limits_{k=1}^{\infty} \frac{n^{\nu_k^*} \Phi^2(\nu_k^* + 1, b + 1, z)}{\int_0^z t^{b-1} e^{-t} \Phi^2(\nu_k^*,b,t) dt}
\end{equation}
\\ where $b=m/2$ and $z = y^2/2$.

\subsection{Bounding the False Alarm Probability}

In this section, guaranteed bounds on $P_{fa}$ are derived that can be used to assess the accuracy of (\ref{eq:approx Pfa}). It is straightforward to obtain an upper bound $P_{fa}^{(u)}$ by truncating the series in (\ref{eq:final Pfa}) to $N$ terms because the contribution of each term in the sum to $P_{fa}$ is negative. Therefore, we can write $P_{fa} = P_{fa}^{(u)} - \varepsilon_N$, where

\begin{equation}
	\label{eq:truncation error}
	\varepsilon_N = \frac{z^{2b}e^{-2z}}{b^2 \Gamma(b)} \sum\limits_{k=N+1}^{\infty} n^{\nu_k^*} \frac{\Phi^2(\nu_k^* + 1, b + 1, z)}{\int_0^z t^{b-1} e^{-t} \Phi^2(\nu_k^*, b, t)dt}
\end{equation}\\
\noindent Since the truncation error $\varepsilon_N$ is positive (each term in (\ref{eq:truncation error}) is positive), given an upper bound $\bar{\varepsilon}_N$ we can immediately construct the lower bound $P_{fa}^{(l)} = P_{fa}^{(u)} - \bar{\varepsilon}_N$.

\begin{proposition}\label{prop:eps_N bound} Let $b$ and $z$ be positive real numbers and $-\infty < \cdots < \nu_{k+1}^* < \nu_k^* < \cdots < 0$ be the sequence of zeros of $\Phi(\nu,b,z)$. Given a bound $\Delta$ such that $\nu_k^* - \nu_{k+1}^* \geq \Delta$ for $k>N$, an upper bound on $\varepsilon_N$ is
	
	\begin{equation}
		\label{eq:eps_N bound}
		\bar{\varepsilon}_N = \frac{z^{b-1} e^{-z} (b-2\nu_N^*)}{2 (\nu_N^*)^2 \Gamma(b)} \frac{n^{\nu_N^*}}{n^{\Delta} - 1}
	\end{equation}
\end{proposition}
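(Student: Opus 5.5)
Our plan is to bound each term of $\varepsilon_N$ by something depending only on $\nu_N^*$ and the gap $\Delta$, and then sum a geometric series. The key tool is the upper bound on $\partial a^*/\partial z$ from Section \ref{sec:properties of a_star}. There we showed, for a zero $a^*<0$ of $\Phi(a^*,b,z)$ with $k=b/2-a^*$,
\begin{equation*}
\int_0^z t^{b-1}e^{-t}\Phi^2(a^*,b,t)\,dt \geq \frac{z^{b+1}e^{-z}}{k}\left(\frac{a^*}{b}\right)^2\Phi^2(a^*+1,b+1,z).
\end{equation*}
Applying this with $a^*=\nu_k^*$ bounds the ratio in (\ref{eq:truncation error}):
\begin{equation*}
\frac{\Phi^2(\nu_k^*+1,b+1,z)}{\int_0^z t^{b-1}e^{-t}\Phi^2(\nu_k^*,b,t)\,dt}\leq \frac{b^2(b/2-\nu_k^*)}{(\nu_k^*)^2 z^{b+1}e^{-z}} = \frac{b^2(b-2\nu_k^*)}{2(\nu_k^*)^2 z^{b+1}e^{-z}}.
\end{equation*}
Substituting into (\ref{eq:truncation error}), the prefactor $z^{2b}e^{-2z}/(b^2\Gamma(b))$ cancels to $z^{b-1}e^{-z}/\Gamma(b)$. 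This gives
\begin{equation*}
\varepsilon_N\leq \frac{z^{b-1}e^{-z}}{2\Gamma(b)}\sum_{k=N+1}^\infty \frac{b-2\nu_k^*}{(\nu_k^*)^2}\,n^{\nu_k^*}.
\end{equation*}

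Next we control the factor $(b-2\nu)/\nu^2 = b/\nu^2 - 2/\nu$. For $\nu<0$ this function is positive and increasing in $\nu$: its derivative is $-2b/\nu^3+2/\nu^2>0$. Since $\nu_k^*<\nu_N^*$ for $k>N$, each factor is at most $(b-2\nu_N^*)/(\nu_N^*)^2$, which can be pulled out of the sum.

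It remains to bound $\sum_{k>N} n^{\nu_k^*}$, assuming $n>1$ (the time interval $[1,n]$). Telescoping the gap hypothesis gives $\nu_k^*\leq \nu_N^*-(k-N)\Delta$. This needs $\nu_j^*-\nu_{j+1}^*\geq\Delta$ for every $j\geq N$. The statement says $k>N$, so at the first step we either read the hypothesis as covering $j=N$ or note the indexing convention. Then
\begin{equation*}
\sum_{k>N} n^{\nu_k^*}\leq n^{\nu_N^*}\sum_{j\geq1}n^{-j\Delta} = \frac{n^{\nu_N^*}}{n^\Delta-1}.
\end{equation*}
Combining this with the previous two steps yields exactly (\ref{eq:eps_N bound}).

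The main obstacle is verifying that the Section \ref{sec:properties of a_star} inequality really applies. It needs $a^*<0$, which holds by Proposition \ref{prop:zeros a_star of Phi}. It also needs the integrand comparison $k t^{b-1}\geq (k/t-1/2)t^b$, which holds because $t\geq0$. Finally, the boundary terms in (\ref{eq:integral 2}) must reduce correctly when $\Phi(a^*,b,z)=0$; the terms involving $\Phi(a^*,b,z)$ vanish, leaving $z^{b+1}e^{-z}(a^*/b)^2\Phi^2(a^*+1,b+1,z)$. The rest is routine algebra.
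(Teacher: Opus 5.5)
Your proposal is correct and follows the paper's proof. The paper obtains the same lower bound $I \geq 2(\nu_k^*)^2 e^{-z} z^{b+1}\Phi^2(\nu_k^*+1,b+1,z)/[b^2(b-2\nu_k^*)]$ by rewriting (\ref{eq:integral 2}) and dropping the nonnegative term $\frac{1}{b-2\nu_k^*}\int_0^z e^{-t}t^b\Phi^2(\nu_k^*,b,t)\,dt$, which is equivalent to your use of the Section~\ref{sec:properties of a_star} inequality; it then pulls out the monotone coefficient at $k=N$ and sums the geometric series, as you do. Your indexing remark is well taken: the paper's step $\sum_{k>N} n^{\nu_k^*} \le \sum_{k>N} n^{\nu_N^*-\Delta(k-N)}$ also tacitly needs the gap bound at $k=N$ (Algorithm~\ref{alg:myalgorithm} indeed includes $\Delta[N]=\nu_N^*-\nu_{N+1}^*$ in the minimum), and it likewise needs $n>1$.
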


\begin{proof}
	To get an upper bound on $\varepsilon_N$, the first step is to derive a lower bound on the integral $I$ in (\ref{eq:truncation error}). From (\ref{eq:integral 2}) with $a = \nu_k^*$, and noting that $\Phi(\nu_k^*, b, z) = 0$,
	
	\begin{equation}
		\label{eq:integral result}
		I = \frac{2}{b-2\nu_k^*} z^{b+1} e^{-z} \left(\frac{\nu_k^*}{b}\right)^2 \Phi^2(\nu_k^*+1, b+1, z) + \frac{1}{b-2\nu_k^*} \int\limits_0^z e^{-t} t^b \Phi^2(\nu_k^*, b, t) dt
	\end{equation}\\
	\noindent The second term on the right-hand side of (\ref{eq:integral result}) is nonnegative. Thus, a lower bound on $I$ is obtained by ignoring this term, i.e.,
	
	\begin{equation}
		\label{eq:lower bound}
		I \geq \frac{2(\nu_k^*)^2 e^{-z} z^{b+1}}{b^2(b - 2\nu_k^*)} \Phi^2(\nu_k^* + 1, b + 1, z)
	\end{equation}\\
	\indent Substituting the bound on $I$ into (\ref{eq:truncation error}) results in
	
	\begin{equation}
		\label{eq:eps_N upper bound}
		\varepsilon_N \leq \frac{z^{b-1}e^{-z}}{2\Gamma(b)} \sum\limits_{k=N+1}^{\infty} \frac{b-2\nu_k^*}{(\nu_k^*)^2} n^{\nu_k^*}
	\end{equation}\\
	\noindent It is straightforward to show that $(b-2\nu_k^*) / (\nu_k^*)^2$ is not only positive (because $\nu_k^* < 0$), but that it also decreases as $\nu_k^*$ decreases. Therefore, we can move the coefficient on $n^{\nu_k^*}$ in (\ref{eq:eps_N upper bound}) outside of the sum by letting $k=N$, which leads to
	
	\begin{equation}
		\label{eq:simplified eps_N bound}
		\varepsilon_N \leq \frac{z^{b-1}e^{-z} (b-2\nu_N^*)}{2(\nu_N^*)^2\Gamma(b)} \sum\limits_{k=N+1}^{\infty} n^{\nu_k^*}
	\end{equation}\\
	\indent Let's now address the infinite series in (\ref{eq:simplified eps_N bound}). Given a lower bound $\Delta$ on $\nu_k^* - \nu_{k+1}^*$ for all $k>N$, we can write
	
	\begin{equation}
		\label{eq:sequence of sums}
		\sum\limits_{k=N+1}^{\infty} n^{\nu_k^*} < \sum\limits_{k=N+1}^{\infty} n^{\nu_N^* - \Delta(k - N)} = n^{\nu_N^* - \Delta} \sum\limits_{k=0}^{\infty} n^{-k\Delta}
	\end{equation}\\
	\noindent The series $\sum_{k=0}^{\infty} n^{-k\Delta}$ is a geometric series. Therefore,
	
	\begin{equation}
		\label{eq:geometric series}
		\sum\limits_{k=N+1}^{\infty} n^{\nu_k^*} < n^{\nu_N^* - \Delta} \left[\frac{n^{\Delta}}{n^{\Delta} - 1}\right] = \frac{n^{\nu_N^*}}{n^{\Delta} - 1}
	\end{equation}\\
	\noindent Substituting (\ref{eq:geometric series}) into (\ref{eq:simplified eps_N bound}) produces the upper bound in (\ref{eq:eps_N bound}).
\end{proof}

To summarize, the probability of false alarm is guaranteed to reside in the interval

\begin{equation}
	\label{eq:guaranteed range}
		\left[ P_{fa}^{(u)} - \frac{z^{b-1} e^{-z} n^{\nu_N^*} (b-2\nu_N^*)}{2 (\nu_N^*)^2 \Gamma(b) (n^{\Delta} - 1)}, P_{fa}^{(u)} \right]
\end{equation}\\
\noindent with

\begin{equation}
	\label{eq:Pfa upper bound}
	P_{fa}^{(u)} = \frac{\Gamma(b, z)}{\Gamma(b)} + \frac{z^b e^{-z}}{b\Gamma(b)} \Phi(1, b+1, z) - \frac{z^{2b} e^{-2z}}{b^2 \Gamma(b)} \sum\limits_{k=1}^N \frac{n^{\nu_k^*} \Phi^2(\nu_k^* + 1, b + 1, z)}{\int_0^z t^{b-1} e^{-t} \Phi^2(\nu_k^*,b,t) dt}
\end{equation}\\

\subsection{Numerical Generation of Probability Bounds}

This section provides an algorithm description for how to numerically generate the containment interval in (\ref{eq:guaranteed range}). First, there are four input/design parameters that need to be specified: the Wiener process dimension $m$ (which determines $b$), the length $n$ of the dimensionless time interval, a desired probability of false alarm, $P_{fa,des}$, and the number of terms $N$ to retain in the residue expansion. Next, (\ref{eq:approx Pfa}) is solved numerically to determine a threshold $y$ (and thus $z$) corresponding to $P_{fa,des}$, after which the $N$ (real and simple) zeros of $\Phi(\nu, b, z)$ closest to the origin are ascertained using a root finding algorithm.

At this point, the upper bound in (\ref{eq:Pfa upper bound}) can be computed. To get the lower bound in (\ref{eq:guaranteed range}), we need to determine $\Delta$ using the results from Theorems \ref{primary theorem} and \ref{secondary theorem}. The key is finding the pair of zeros $\nu_{k-1}^*$ and $\nu_k^*$ that satisfy the inequalities $\nu_k^* < \bar{\nu}^*$ and $z g_{k-1} < \beta_{k-1}$, where we remind the reader that $\beta_{k-1} = b - \nu_{k-1}^* - 1$. For the second inequality, let's substitute the expression for $g_{k-1}$ from (\ref{eq:z bar})

\begin{equation}
	\label{eq:second inequality}
	z \exp\left[ \frac{2\pi}{\sqrt{(2\beta_{k-1} + 1)(2\beta_{k-1} - 2b + 3)}}\right] < \beta_{k-1}
\end{equation}\\
\noindent The left-hand side of (\ref{eq:second inequality}) monotonically decreases with $\beta_{k-1}$ whereas the right-hand side is monotonically increasing, implying that there is one point $\hat{\beta}$ where both sides are equal. Thus, (\ref{eq:second inequality}) is satisfied for all $\beta_{k-1} > \hat{\beta}$. Or equivalently, with $\hat{\nu} = b - \hat{\beta}-1$, the inequality $z g_{k-1} < \beta_{k-1}$ is satisfied when $\nu_{k-1}^* < \hat{\nu}$.

Figure \ref{fig:zero landscape} shows an example of what the zero landscape might look like together with the critical values $\bar{\nu}^*$ and $\hat{\nu}$. In general, several zeros beyond $\nu_N^*$ need to be determined before finding the pair $\nu_{k-1}^*$ and $\nu_k^*$ that satisfies the requisite inequalities. When this pair has been found, Theorem \ref{primary theorem} enables determination of a lower bound $\Delta_{\infty}$ that bounds $\Delta_{k-1}$ and the spacing between all subsequent pairs of zeros $\Delta_k$, $\Delta_{k+1}$, $\cdots$. 

\begin{figure}[!htb]
	\centering
	\includegraphics[scale=0.62]{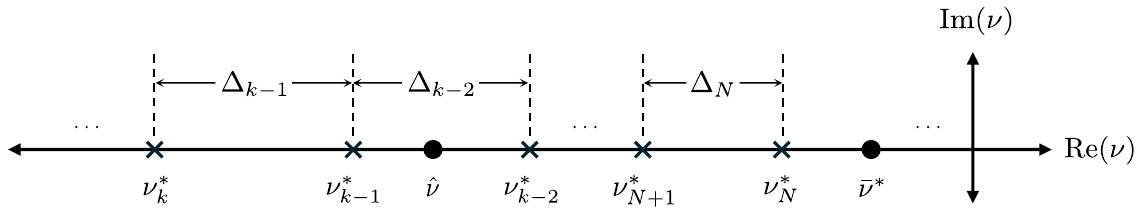}
	\caption{Example zero landscape.}
	\label{fig:zero landscape}
\end{figure}
\noindent Thus, the overall bound $\Delta$ needed in (\ref{eq:guaranteed range}) is $\Delta = \min(\Delta_N, \dots, \Delta_{k-2}, \Delta_{\infty})$. Algorithm \ref{alg:myalgorithm} summarizes the steps needed to compute $\Delta$.

\begin{algorithm}
	\caption{\vphantom{$a^{b^{b^b}}$}: Numerical Determination of $\Delta$ \vphantom{$a_{b_{b_b}}$}}\label{alg:myalgorithm}
	\algsetup{indent=2em}
	\begin{algorithmic}\vspace{1.5mm}
		\STATE \textbf{Input}: $P_{fa}$, $m$, $n$ and $N$ \vspace{1.5mm}
		\STATE Solve Eq. (\ref{eq:approx Pfa}) for the detection threshold $y$ \vspace{1.5mm}
		\STATE $b \gets m/2$ and $z \gets y^2/2$ \vspace{2mm}
		\STATE Solve $z \hspace{0.2mm} e^{\hspace{0.2mm}2\pi / \sqrt{(2\hat{\beta} + 1)(2\hat{\beta} - 2b + 3)}} - \hat{\beta}$ = 0 for $\hat{\beta}$ \vspace{1.5mm}
		\STATE $\hat{\nu} \gets b - \hat{\beta} - 1$ \vspace{1.5mm}
		\STATE $d_0 \gets (b-1)^2 (2b-3) \quad , \quad d_1 \gets 0 \quad , \quad d_2 \gets b^2 - 2$ \vspace{1.5mm}
		\STATE $d_3 \gets -(b-1)^2/\pi \hspace{9mm} , \quad d_4 \gets 1 \quad , \quad d_5 \gets -1/\pi$ \vspace{1.5mm}
		\STATE $d_6 \gets 1/(4\pi^2)$ \vspace{1.5mm}
		\STATE Find all roots $u_i$ of the polynomial $p(u) = \sum_{l=0}^6 d_l u^l$ \vspace{1.5mm}
		\STATE $\bar{\nu}^* \gets 0$ \vspace{1mm}
		\FOR{$i=1$ to $6$} \vspace{2.5mm}
		\STATE $c \gets \displaystyle \frac{b}{2} - \frac{1}{2}\sqrt{(b-1)^2 + u_i^2}$ \vspace{2.0mm}
		\STATE $\text{numer} \gets 4\pi (b - c - 1) (b - 2 c)$ \vspace{1.5mm}
		\STATE $\text{denom} \gets \left[(b - 2c)^2 - (b-1)^2 \right]^{3/2}$ \vspace{1.5mm}
		\IF{$\text{numer} /\text{denom} = 1$ \AND $\operatorname{Im}(c) = 0$} \vspace{1.5mm}
		\STATE $\bar{\nu}^* \gets \operatorname{min}(\bar{\nu}^*, c)$ \vspace{1.5mm}
		\ENDIF \vspace{1mm}
		\ENDFOR\vspace{1mm}
		\STATE Let $\nu_1^*$ and $\nu_2^*<\nu_1^*$ be the two zeros of $\Phi(\nu, b, z)$ closest to the origin. Obtain $\nu_1^*$
		\STATE and $\nu_2^*$ using a numerical solver. \vspace{1.5mm}
		\STATE $\nu^*[1] \gets \nu_1^*$ \AND $\nu^*[2] \gets \nu_2^*$ \AND $\Delta[1] \gets \nu_1^* - \nu_2^*$  \vspace{1.5mm}
		\STATE $i \gets 2$ \vspace{1mm}
		\WHILE{$\nu^*[i-1] \geq \hat{\nu}$ \OR $\nu^*[i] \geq \bar{\nu}^*$ \OR $i<N+1$}  \vspace{1mm}
		\STATE Numerically determine the next zero $\nu_{i+1}^* < \nu_i^*$ of $\Phi(\nu,b,z)$ adjacent to $\nu_i^*$ \vspace{1.5mm}
		\STATE $\nu^*[i+1] \gets \nu_{i+1}^*$ \AND $\Delta[i] \gets \nu_i^* - \nu_{i+1}^*$ \vspace{1.5mm}
		\STATE $i \gets i+1$ \vspace{1mm}
		\ENDWHILE \vspace{0.5mm}
		\STATE $k \gets \text{length}(\nu^*)$ \vspace{1mm}
		\STATE $\beta \gets b - 1 - \nu^*[k]$ \vspace{2.5mm}
		\STATE $g \gets e^{2\pi / \sqrt{(b - 2 \nu^*[k]) ^ 2 - (b - 1) ^ 2}}$ \vspace{1.5mm}
		\STATE $\Delta_{\text{inf}} \gets \beta - \displaystyle \frac{\beta}{4g}\left[ 2 + \sqrt{z/\beta} (g - 1) \right]^2$ \vspace{2.5mm}
		\STATE $\Delta \gets \operatorname{min}(\Delta[N], \hdots, \Delta[k-2], \Delta_{\text{inf}})$
	\end{algorithmic}
\end{algorithm}

\subsection{Results}

We are now positioned to explore the accuracy of (\ref{eq:approx Pfa}). Specifically, we seek to determine how closely the true probability of false alarm agrees with the expected value. To this aim, let's focus on the desired value of $P_{fa} = 10^{-4}$ for Wiener process dimensions $m = 1, 3, 7$ and $10$, and non-dimensional time intervals of $n = 5, 10, 30$ and $100$. In addition, we will let $N=3$ in (\ref{eq:guaranteed range}) and (\ref{eq:Pfa upper bound}). A detailed analysis is given first for $m=3$ and $n=10$.

Following the first half of Algorithm \ref{alg:myalgorithm}, we determine that $y = 5.308$, $\hat{\nu} = -16.417$ and $\bar{\nu}^* = -2.153$. The next step is to obtain the sequence of zeros $\nu_1^*, \hdots, \nu_k^*$ such that $\nu_{k-1}^* < \hat{\nu}$ and $\nu_k^* < \bar{\nu}^*$, the results of which are summarized in Table \ref{tab:zeros of Phi}. Notice that we needed to determine the first eleven zeros until the necessary inequalities are satisfied.

\begin{table}[h]
	\renewcommand{\arraystretch}{1.5}
	\caption{Zeros of $\Phi(\nu, m/2, y^2/2)$ for $m=3$ and $y = 5.308$}\label{tab:zeros of Phi}%
	\begin{tabular*}{230pt}{@{\extracolsep\fill}cccc@{}}
		\toprule
		Zero & Location & Zero & Location \\
		\midrule
		$\nu_1^*$     & $-4.014\text{E}\!-\!05$ & $\nu_7^*$    & $-9.035$ \\
        $\nu_2^*$     & $-1.003$                & $\nu_8^*$    & $-11.655$ \\
        $\nu_3^*$     & $-2.054$                & $\nu_9^*$    & $-14.628$ \\
        $\nu_4^*$     & $-3.296$                & $\nu_{10}^*$ & $-17.953$ \\
        $\nu_5^*$     & $-4.855$                & $\nu_{11}^*$ & $-21.629$ \\
        $\nu_6^*$     & $-6.767$                &              &    \\
		\botrule
	\end{tabular*}
\end{table}
\noindent The last step is to compute the bound $\Delta$, which one can verify is $\Delta = 0.516$. Substituting into (\ref{eq:guaranteed range}), we conclude that the true probability of false alarm resides in the interval

\begin{equation*}
	P_{fa,\text{true}} \in [9.99199 ,9.99282] \times 10^{-5}
\end{equation*}\\
\noindent Observe that with just three residues ($N=3$), we are able to place $P_{fa,\text{true}}$ within a tight interval. It is also comforting to see that the approximation in (\ref{eq:approx Pfa}) is quite accurate, yielding a detection threshold that produces a true false alarm probability within $0.08\%$ of the desired value of $10^{-4}$.

\begin{table}[h]
	\renewcommand{\arraystretch}{1.4}
	\caption{Maximum percent difference between true and desired probability of false alarm}\label{tab:percent differences}
	\begin{tabular*}{333pt}{@{\extracolsep\fill}ccccc@{}}
		\toprule%
		& \multicolumn{4}{@{}c@{}}{Length of Non-dimensional Time Interval, $n$} \\\cmidrule{2-5}%
		Process Dimension, $m$ & $n=5$ & $n=10$ & $n=30$ & $n=100$ \\
		\midrule
		$1$  & $0.34$ & $0.08$ & $0.12$ & $0.22$ \\
		$3$  & $0.37$ & $0.08$ & $0.39$ & $0.54$ \\
		$7$  & $0.43$ & $0.22$ & $0.65$ & $0.87$ \\
		$10$  & $0.47$ & $0.28$ & $0.78$ & $1.02$ \\
		\botrule
	\end{tabular*}
\end{table}
Similar results are obtained for other combinations of $m$ and $n$ that are summarized in Table \ref{tab:percent differences}. The largest percent difference observed is $1\%$, which occurs when monitoring a ten-dimensional Wiener process over the time interval $[1, 100]$. This level of performance is satisfactory for most applications. If this is not the case, the discrepancy between the true and desired probability of false alarm can be reduced by iterating on the threshold $y$ until the percent difference reaches an acceptable level.

\section{Conclusion}\label{sec:conclusions}

A lower bound on the separation between consecutive zeros of $\Phi(a,b,z)$ was derived for variable $a$ and $(b,z) \in \mathbb{R}^+$ known and fixed. Conditions for monotonicity of the bound were derived and used to analyze the accuracy of asymptotic approximations for the first passage probability of an $m$-dimensional Wiener process. We showed that when such approximations are used, the true probability is within $1\%$ of the expected value over a range of process dimensions and observation intervals. The validity of a residue expansion for the first passage probability was also rigorously proven using recent results from value distribution theory. One direction for future research is to obtain an improved integral bound over that given in Lemma \ref{lem:integral_bound} that is valid for all $a \in \mathbb{R}^-$, which would eliminate the constraint $z<-1-a+b$. Another avenue to explore is whether the results of this paper can be used to infer properties of other special functions, many of which can be written in terms of the confluent hypergeometric function. 

\backmatter

\begin{appendices}
\renewcommand\theHequation{AABB\arabic{equation}}

\section{Integral Derivations}\label{Appendix A}

This appendix derives (\ref{eq:integral 1}) and (\ref{eq:integral 2}). Recall from (\ref{eq:Whittaker M function}) that the Whittaker $\mathcal{M}$ function is defined as

\begin{equation}
	\label{eq:copy Whittaker M}
	\mathcal{M}_{\varkappa, \mu/2}(z) = \frac{1}{\Gamma(1+\mu)} z^{(1+\mu)/2}e^{-z/2} \Phi\left(\frac{1+\mu}{2} - \varkappa, 1 + \mu, z\right)
\end{equation}\\
\noindent Equation (4a) in \cite[p. 113]{Buchholz} gives the following indefinite integral for $\varkappa \neq \lambda$

\begin{equation}
	\label{eq:4a Buchholz}
	\begin{split}
		(\varkappa - \lambda) &\int \mathcal{M}_{\varkappa, \mu/2}(z) \mathcal{M}_{\lambda,\mu/2}(x)\frac{dz}{z} = \\ & \mathcal{M}_{\varkappa,\mu/2}(z) \mathcal{M}^{\prime}_{\lambda,\mu/2}(z) - \mathcal{M}^{\prime}_{\varkappa,\mu/2}(z)\mathcal{M}_{\lambda,\mu/2}(z)
	\end{split}
\end{equation}\\
\noindent such that $\mathcal{M}^{\prime}_{\varkappa,\mu/2}(z)$ is the derivative of $\mathcal{M}_{\varkappa,\mu/2}(z)$ with respect to $z$. From (\ref{eq:copy Whittaker M}),

\begin{equation}
	\label{eq:dM_dz}
	\mathcal{M}^{\prime}_{\varkappa, \mu/2}(z) =  \frac{e^{-z/2}z^{b/2}}{\Gamma(b)} \left[ \frac{\xi}{b} \Phi(\xi+1,b+1,z) + \Phi(\xi, b, z) \left( \frac{b}{2z} - \frac{1}{2} \right) \right]
\end{equation}\\
\noindent with $\xi = (1+\mu)/2 - \varkappa$ and $b = 1+\mu$. After substituting (\ref{eq:copy Whittaker M}) and (\ref{eq:dM_dz}) into (\ref{eq:4a Buchholz}) and defining $\eta = (1+\mu)/2 - \lambda$, we get the following expression for $\xi \neq \eta$

\begin{equation}
	\label{eq:int product of phis}
	\begin{split}
		\int z^{b-1}e^{-z} & \Phi(\xi, b, z) \Phi(\eta, b, z) dz = \\ & \frac{e^{-z} z^b}{b(\eta - \xi)} \left[ \eta \Phi(\xi, b, z) \Phi(\eta + 1, b + 1, z) - \xi \Phi(\eta, b, z) \Phi(\xi + 1, b + 1, z) \right]
	\end{split}
\end{equation}\\
\indent Next, we convert (\ref{eq:int product of phis}) to a definite integral. Writing (\ref{eq:int product of phis}) generically as $\int g(z)dz = G(z)$, we seek a point $c$ where $G(c) = 0$, in which case $\int_c^z g(t) dt = G(z)$. To this aim, consider the behavior of $G(z)$ near $z=0$. Entry 13.5.5 in \cite{Abramowitz} shows that $\Phi(a,b,z) \rightarrow 1$ as $\lvert z \rvert \rightarrow 0$, provided that $b$ is not a negative integer. Substituting this result into the right-hand side of (\ref{eq:int product of phis}), it is straightforward to show that as $\lvert z \rvert \rightarrow 0$, $G(z) \rightarrow z^b/b$. Thus, provided that $\text{Re}(b) > 0$, $G(z) \rightarrow 0$ as $\lvert z \rvert \rightarrow 0$, and (\ref{eq:int product of phis}) can be written as the definite integral

\begin{equation}
	\begin{split}
		\int\limits_0^z t^{b-1}e^{-t} & \Phi(\xi, b, t) \Phi(\eta, b, t) dt = \\ & \frac{e^{-z} z^b}{b(\eta - \xi)} \left[\eta \Phi(\xi, b, z) \Phi(\eta + 1, b + 1, z) - \xi \Phi(\eta, b, z) \Phi(\xi + 1, b + 1, z) \right]
	\end{split}
\end{equation}\\
\indent A similar approach is used to derive (\ref{eq:integral 2}), starting from Eq. (4$\beta$) in \cite[p. 114]{Buchholz}. There is an error in \cite{Buchholz} that is corrected in Appendix \ref{Appendix C}, leading to the relation\footnote[7]{In \cite{Buchholz}, the derivative on the right-hand side of (\ref{eq:4 beta Buchholz}) is written $d/dx$, and not $d/d(cx)$, which produces an additional factor $c$ that should not exist.}
\begin{equation}
	\label{eq:4 beta Buchholz}
	\begin{split}
		\int \left( \frac{1}{2} - \frac{\varkappa}{cz} \right) & \mathcal{M}_{\varkappa, \mu/2}^2 (cz) d(cz) = -cz \mathcal{M}_{\varkappa, \mu/2}^2(cz) \left[-\frac{1}{4} + \frac{\varkappa}{cz} + \frac{1 - \mu^2}{4c^2 z^2} \right] \\ &-cz [\mathcal{M}_{\varkappa, \mu/2}^{\prime}(cz)]^2 + \frac{1}{2} \frac{d}{d(cz)} \mathcal{M}_{\varkappa, \mu/2}^2(cz)
	\end{split}
\end{equation}\\
\noindent Let $c=1$ and recall from earlier that as $\lvert z \rvert \rightarrow 0$, $\Phi(a, b, z) \rightarrow 1$ for $b$ not equal to a negative integer. Then from (\ref{eq:copy Whittaker M}) and (\ref{eq:dM_dz}), we have the following limiting behavior as $\lvert z \rvert \rightarrow 0$
\begin{equation}
	\label{eq:M and Mprime limits}
	\begin{split}
		\mathcal{M}_{\varkappa, \mu/2}(z) & \approx z^{b/2} / \Gamma(b) \\[1ex] \mathcal{M}_{\varkappa, \mu/2}^{\prime}(z) & \approx \frac{z^{b/2}}{\Gamma(b)} \left[ \frac{a}{b} + \frac{b}{2z} - \frac{1}{2} \right]
	\end{split}
\end{equation}\\
\noindent where $a = (1+\mu)/2 - \varkappa$ and $b = 1+\mu$.

Denote the right-hand side of (\ref{eq:4 beta Buchholz}) as $G(z)$. After substituting the expressions in (\ref{eq:M and Mprime limits}), we can see that
\begin{equation}
	\text{As } \lvert z \rvert \rightarrow 0 \, , \, G(z) \rightarrow \frac{z^b [a z (b-a) - b \varkappa]}{b^2}
\end{equation}\\
\noindent If $\text{Re}(b) > 0$, $G(z) \rightarrow 0$ as $\lvert z \rvert \rightarrow 0$, and (\ref{eq:4 beta Buchholz}) can be written as the definite integral
\begin{equation}
	\begin{split}
		\int\limits_0^z \left(\frac{1}{2} - \frac{\varkappa}{t} \right) & \mathcal{M}_{\varkappa, \mu/2}^2 (t) dt = -z \mathcal{M}_{\varkappa, \mu/2}^2(z) \left[-\frac{1}{4} + \frac{\varkappa}{z} + \frac{1 - \mu^2}{4z^2} \right] \\ &-z [\mathcal{M}_{\varkappa, \mu/2}^{\prime}(z)]^2 + \mathcal{M}_{\varkappa,\mu/2}(z) \mathcal{M}_{\varkappa,\mu/2}^{\prime}(z)
	\end{split}
\end{equation}\\
\noindent Substituting the definitions for $\mathcal{M}_{\varkappa, \mu/2}(z)$ and $\mathcal{M}_{\varkappa, \mu/2}^{\prime}(z)$ and defining $k=b/2-a$, we arrive at the desired result in (\ref{eq:integral 2}).

\section{Inverse Laplace Transform as a Residue Expansion}\label{Appendix B}

This appendix is concerned with the inverse Laplace transform of the function

\begin{equation}
	\label{eq:ratio of hypergeometrics}
	G(\nu) = \frac{\Phi(\nu+1, b+1, z)}{\nu \Phi(\nu, b, z)} = \frac{b}{\nu^2} \frac{\Phi^{\prime}(\nu, b, z)}{\Phi(\nu, b, z)}
\end{equation}\\
\noindent where $(b, z) \in \mathbb{R}^+$ have known fixed values and $\Phi^{\prime}(\nu, b, z)$ is the derivative of $\Phi(\nu, b, z)$ with respect to $z$. From Proposition \ref{prop:zeros a_star of Phi}, $G(\nu)$ has a simple pole at $\nu=0$ and an infinite set of simple poles on the $-\mathrm{Re}(\nu)$ axis corresponding to the zeros of $\Phi(\nu, b, z)$. The set of zeros of $\Phi(\nu, b, z)$ must be infinite, since otherwise we would infer asymptotic behavior inconsistent with the behavior of $\Phi(\nu, b, z)$ for $\nu \rightarrow \infty$ (see \cite[p. 185]{Buchholz}).

The inverse Laplace transform of $G(\nu)$ is defined by the complex line integral \cite{Schiff}

\begin{equation}
	\label{eq:line integral}
	\mathcal{L}^{-1}\{G(\nu)\} = \lim_{\gamma \rightarrow \infty} \frac{1}{2\pi i} \int\limits_{\varepsilon - i \gamma}^{\varepsilon + i \gamma} e^{\nu u} G(\nu) d\nu
\end{equation}\\
\noindent such that $\varepsilon > 0$ is an arbitrarily small number\footnote[8]{In general, $\varepsilon$ must be greater than the real part of all poles of $G(\nu)$. For us, the poles all happen to be in the left-half plane, so that we can take $\varepsilon$ to be an arbitrarily small number.}. When $G(\nu)$ has an infinite number of poles, (\ref{eq:line integral}) is usually evaluated by examining the limiting behavior of the integral around the semi-circular contour shown in Fig. \ref{fig:closed contour} as $R \rightarrow \infty$. For finite $R$, the contour encloses a finite set of simple poles, so that from Cauchy's residue theorem,

\begin{figure}[!htb]
	\centering
	\includegraphics[scale=0.75]{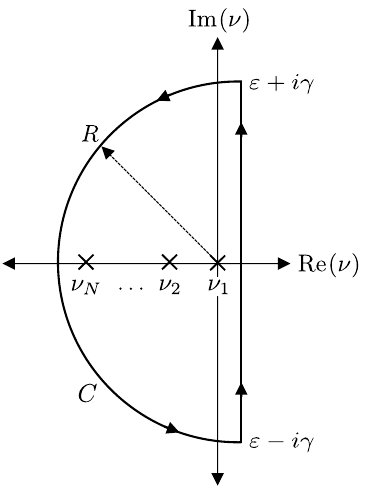}
	\caption{Closed contour used to evaluate $\mathcal{L}^{-1}\{ G(\nu)\}$. The $\times$'s indicate the poles of $G(\nu)$.}
	\label{fig:closed contour}
\end{figure}

\begin{equation}
	\label{eq:Cauchy theorem}
	\frac{1}{2\pi i} \int\limits_{\varepsilon - i \gamma}^{\varepsilon + i \gamma} e^{\nu u} G(\nu) d\nu + \frac{1}{2\pi i} \int\limits_C e^{\nu u} G(\nu) d\nu = \sum_{k=1}^N \text{Res}(e^{\nu u} G, \nu_k^*) \text{ with } \lvert \nu_k^* \rvert < R
\end{equation}\\
\noindent where $\text{Res} (e^{\nu u} G, \nu_k^*)$ is the residue of $e^{\nu u} G(\nu)$ at the pole $\nu_k^*$.

If we can show that $\int_C e^{\nu u} G(\nu) d\nu \rightarrow 0$ as $R \rightarrow \infty$, then $\mathcal{L}^{-1}\{G(\nu)\}$ reduces to an infinite residue expansion, i.e.,

\begin{equation}
	\label{eq:Residue expansion}
	\lim_{\gamma \rightarrow \infty} \frac{1}{2\pi i} \int\limits_{\varepsilon - i \gamma}^{\varepsilon + i \gamma} e^{\nu u} G(\nu) d\nu = \mathcal{L}^{-1} \{G(\nu)\} = \sum_{k=1}^{\infty} \text{Res}(e^{\nu u} G, \nu_k^*)
\end{equation}\\
\noindent Jordan's lemma states that for $\theta \in [\pi/2, 3\pi/2]$, if $\lvert G(Re^{i\theta}) \rvert \leq q(R)$, with $q(R) \rightarrow 0$ uniformly as $R\rightarrow \infty$, then $\int_C e^{\nu u} G(\nu) d\nu \rightarrow 0$ as $R \rightarrow \infty$ \cite{Schiff}. To ascertain whether these conditions are met, we first determine a growth restriction for $G(\nu)$.

\begin{theorem}
	Let $G(\nu)$ be as defined in (\ref{eq:ratio of hypergeometrics}) and let $\nu$ be any point on $C$ in Fig. \ref{fig:closed contour} such that $R>>1$ and $\Phi(-R, b, z) \neq 0$. Then with $(x)_i = x (x+1) \cdots (x + i - 1)$, there exist finite $M_1$ and $M_2$ such that
	
	\begin{equation}
		\label{eq:growth restriction}
		\lvert G(\nu) \rvert \leq \left\{\begin{array}{lr}
		\displaystyle\frac{M_1}{R^2}, & \text{for } \xi \leq 1\\[3ex]
		\displaystyle \frac{M_2\ln R}{R^2}, & \text{for } \xi > 1
	\end{array}\right.
	\end{equation}\\
	\noindent where $\xi = \displaystyle \max_{1\leq i \leq j} \left| (\nu)_i / (b)_i \right|$ and $j \in \mathbb{Z}^+$ is the smallest integer such that $\text{Re}(\nu) + j > 0$ and $\lvert \nu + j \rvert < \lvert b+j \rvert$.
\end{theorem}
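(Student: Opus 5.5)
We bound the numerator and the denominator of $G(\nu)$ separately, using the second representation $G(\nu) = (b/\nu^2)\,\Phi'(\nu,b,z)/\Phi(\nu,b,z)$ in (\ref{eq:ratio of hypergeometrics}). The factor $b/\nu^2$ already supplies $R^{-2}$ on $C$. It therefore suffices to show that the logarithmic derivative $\Phi'/\Phi = (\nu/b)\Phi(\nu+1,b+1,z)/\Phi(\nu,b,z)$ is $O(1)$ when $\xi\le 1$ and $O(\ln R)$ when $\xi>1$, uniformly on the arc. The tool is Nevanlinna-type control of $\log|\Phi|$ as an entire function of $\nu$, as the keywords suggest. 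Away from the poles one bounds $\log^+|\Phi'/\Phi|$ by the characteristic $T(r,\Phi)$ through the lemma on the logarithmic derivative. The hypothesis $\Phi(-R,b,z)\neq 0$ guarantees that the arc avoids the zeros on the negative real axis.

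\emph{Step 1: estimate the series termwise for $|\nu|=R$.}
Write $\Phi(\nu,b,z)=\sum_n \frac{(\nu)_n}{(b)_n}\frac{z^n}{n!}$. For $n\ge j$, factor $(\nu)_n/(b)_n = \frac{(\nu)_j}{(b)_j}\cdot\frac{(\nu+j)_{n-j}}{(b+j)_{n-j}}$. The choice of $j$ in the statement (namely $\mathrm{Re}(\nu)+j>0$ and $|\nu+j|<|b+j|$) is designed so that every later ratio $|\nu+i|/|b+i|$, $i\ge j$, is at most $1$. Hence the tail is dominated by $\xi\, e^{z}$, and the first $j$ terms are bounded by $\xi$ times a polynomial in $z$. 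This gives $|\Phi(\nu,b,z)|\le C\,\xi\, e^{z}$, or $\le Ce^z$ when $\xi\le 1$. The same estimate holds for $\Phi(\nu+1,b+1,z)$, with $j$ essentially unchanged.

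\emph{Step 2: lower-bound $|\Phi(\nu,b,z)|$ on $C$.}
This is the main obstacle. For the right half of the arc, and for most of the left half, I would use the large-$|\nu|$ asymptotics (the Bessel-type expansion in $\sqrt{\nu z}$ from \cite{Buchholz}). These show that $|\Phi|$ grows like $\exp(2\sqrt{R z}\,|\sin(\theta/2)|)$ up to algebraic factors. Near the negative real axis I would instead appeal to the Hadamard factorization: zeros of order one, with spacing bounded below (Theorem \ref{primary theorem} for large $|a|$), give a minimum-modulus estimate $\log|\Phi|\ge -K\ln R$ on circles that avoid zeros. Combining this with Step 1 through Poisson–Jensen, or directly via the logarithmic derivative lemma, should give $|\Phi'/\Phi|\le M\max(1,\log\xi)$. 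Since $\xi$ is a product of $j=O(R)$ ratios, each of size $O(1)$, a careful count should reduce this to $O(\ln R)$ when $\xi>1$. If the count instead produces a power of $R$, I would fall back on the asymptotic ratio $\Phi(\nu+1,b+1,z)/\Phi(\nu,b,z)\sim \sqrt{z/\nu}\,(\text{Bessel ratio})$, which is bounded away from the zeros.

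\emph{Step 3: assemble.}
Multiplying by $b/R^2$ yields $|G|\le M_1/R^2$ when $\xi\le1$ and $|G|\le M_2\ln R/R^2$ when $\xi>1$. In both cases $q(R)\to0$, which is what Jordan's lemma requires afterwards.
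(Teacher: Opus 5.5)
The gap is in Step 2 and in the assembly of Step 3. You need the $z$-logarithmic derivative $\Phi'/\Phi=(\nu/b)\,\Phi(\nu+1,b+1,z)/\Phi(\nu,b,z)$ to be $O(1)$ or $O(\ln R)$ on $C$. That cannot be proved, because it is false, and your own fallback shows it. Put $\kappa=b/2-\nu$ and $w=2\sqrt{\kappa z}$. The leading term of the Bessel-function expansion you cite gives $\Phi(\nu,b,z)\approx\Gamma(b)e^{z/2}(\kappa z)^{(1-b)/2}J_{b-1}(w)$, and the analogous formula holds for $\Phi(\nu+1,b+1,z)$ with $b+1$ and $\kappa-\frac12$. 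Hence
\[
\frac{\Phi'(\nu,b,z)}{\Phi(\nu,b,z)}=-\sqrt{\kappa/z}\;\frac{J_b(w)}{J_{b-1}(w)}+O(1).
\]
Take $\nu=Re^{i\theta}$ with $\theta$ bounded away from $\pi$. Then $|\mathrm{Im}\,w|\approx 2\sqrt{Rz}\,|\cos(\theta/2)|\to\infty$, so $|J_b(w)/J_{b-1}(w)|\to 1$. This gives $|\Phi'/\Phi|\sim\sqrt{R/z}$ and $|G(\nu)|\sim b\,z^{-1/2}R^{-3/2}$, which is not $O(R^{-2}\ln R)$. For $b=\frac12$ the leading term is simply $e^{z/2}\cos(2\sqrt{\kappa z})$ and the ratio becomes a tangent. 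So the fallback refutes the target rate rather than rescuing it.

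The ``careful count'' of $\xi$ also fails:
\begin{itemize}
\item $\xi\ge|\nu|/b=R/b$, so the branch $\xi\le 1$ is empty once $R>b$.
\item Near $\theta=\pi/2$ the index $j$ is about $R^2/(2b)$, not $O(R)$.
\item The first ratios $|\nu+i|/(b+i)$ are of size $R/b$, not $O(1)$.
\item $\ln\xi$ grows linearly in $R$: about $R\ln 2$ at $\theta=\pi$ and about $\pi R/2$ at $\theta=\pi/2$.
\end{itemize}
Consequently your Step~1 bound $\max(1,\xi)e^{z}$ overshoots the true size $e^{O(\sqrt R)}$ of $\Phi$ on $C$ by a factor $e^{\Theta(R)}$. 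Dividing it by any lower bound on $|\Phi(\nu,b,z)|$ is therefore useless.

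Finally, a minimum-modulus estimate ``on circles that avoid zeros'' holds only for selected radii, whereas the statement quantifies over every $R$ with $\Phi(-R,b,z)\neq 0$. At a zero $\nu^*$ one has $\Phi(\nu^*+1,b+1,z)\neq0$; otherwise $\Phi(\nu^*,b,\cdot)$ and its $z$-derivative would vanish together at the regular point $z$ of Kummer's equation. So $|G(-R)|\to\infty$ as $-R\to\nu^*$, and no uniform $M_1,M_2$ can exist.

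For comparison, the paper applies Nevanlinna theory in $z$, not in $\nu$. It uses Goldberg's pointwise estimate $|f'/f|\le 4sT(s,f)/(s-|z|)^2+2\sum_{|c_q|<s}|z-c_q|^{-1}$ for $f=\Phi(\nu,b,\cdot)$ viewed as an entire function of $z$. There $f'$ is exactly the derivative occurring in $G$, and no lower bound on $|\Phi|$ is needed. It then bounds $T=m$ by Luo's $m\le s+\ln(1+\xi)$, which is precisely the termwise estimate of your Step~1. Your version in the variable $\nu$ cannot replace this. The logarithmic-derivative lemma there controls $\partial_\nu\Phi/\Phi$, not $\partial_z\Phi/\Phi$, and it bounds the circle average $m(r,\cdot)$ rather than pointwise values.

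Be aware, however, that following the paper will not close your gap either. It reaches $\ln R$ only by treating the maximizing index $k$ as fixed in $\ln(|\nu|)_k\approx k\ln|\nu|$, although $k$ grows with $R$ as shown above. It also asserts that the zero sum is $O(1)$, although $\Phi(\nu,b,\cdot)$ has on the order of $\sqrt{Rs}$ zeros in $|z|<s$. What the asymptotics support is a bound like $|G|=O(R^{-3/2})$ along radii chosen midway between consecutive zeros. That would still suffice for Jordan's lemma, but the stated $R^{-2}$ and $R^{-2}\ln R$ rates are not attainable.
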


\begin{proof}
	First recognize $\Phi^{\prime}(\nu, b, z) / \Phi(\nu, b, z)$ as the logarithmic derivative of $\Phi(\nu, b, z)$, a quantity that has been studied extensively under Nevanlinna's value distribution theory. In particular, let $f(z)$ be a meromorphic function satisfying $f(0) = 1$ with a set of zeros $\{a_m\}$ and a set of poles $\{b_n\}$. Inside the disk $\lvert z \rvert < s$, $f^{\prime}(z)/f(z)$ satisfies the bound \cite[eq. $(1.3^{\prime})$, p. 88]{Goldberg}
	
	\begin{equation}
		\label{eq:logarithmic derivative bound}
		\left| \frac{f^{\prime}(z)}{f(z)} \right| \leq \frac{4sT(s,f)}{(s - \lvert z \rvert)^2} + 2 \sum_{\lvert c_q \rvert < s} \frac{1}{\lvert z - c_q \rvert} \;\; , \;\; \lvert z \rvert < s
	\end{equation}\\	
	\noindent where $\{c_q\}$ is the set-theoretic sum of $\{a_m\}$ and $\{b_n\}$ and $T(s,f)$ is the Nevanlinna characteristic.
	
	For a nonconstant meromorphic function $f(z)$, $T(s,f) = m(s,f) + N(s,f)$, where $m(s,f)$ and $N(s,f)$ are the proximity and counting functions, respectively \cite[eq. (12)]{Luo}. The counting function is defined as
	
	\begin{equation}
		\label{eq:counting function}
		N(s,f) = \int\limits_{0}^{s} \frac{n(t,f) - n(0,f)}{t} dt + n(0, f) \ln s
	\end{equation}\\	
	\noindent such that $n(t,f)$ is the number of poles of $f$ in the closed disc $\overline{D(0,t)} = \{z : \lvert z \rvert \leq t \}$, counting multiplicities. For entire functions, which have no poles, $n(0,f) = n(t,f) = 0$, implying that $N(s,f) = 0$ and therefore $T(s,f) = m(s,f)$. Thus, for the entire function $\Phi(\nu, b, z)$, (\ref{eq:logarithmic derivative bound}) can be written as
	
	\begin{equation}
		\label{eq:log der bound}
		\left| \frac{\Phi^{\prime}(\nu, b, z)}{\Phi(\nu, b, z)} \right| \leq \frac{4sm[s,\Phi(\nu, b, z)]}{(s - \lvert z \rvert)^2} + 2 \sum_{\lvert c_q \rvert < s} \frac{1}{\lvert z - c_q \rvert} \;\; , \;\; \lvert z \rvert < s
	\end{equation}\\	
	\indent We are interested in analyzing the bound in (\ref{eq:log der bound}) along the circular arc in Fig. \ref{fig:closed contour}. That is, when $\nu = R e^{i \theta}$ and $\theta \in [\pi/2 - \delta, 3\pi / 2 + \delta]$, with $\delta = \sin^{-1}(\varepsilon / R)$. The function $\Phi(\nu, b, z)$ is analytic over the entire complex plane and therefore can only have a finite number of zeros in the disk $\lvert z \rvert < s$, provided that $s$ is finite\footnote[9]{We will only be concerned with finite $z$, so that a finite $s$ can always be found that satisfies $\lvert z \rvert < s$.} \cite[Th. 6.39]{Willms}. We also know from Proposition \ref{prop:zeros a_star of Phi} that when $b \in \mathbb{R}^+$ and $\nu$ is complex (i.e., $\theta \neq \pi$), all elements of $\{c_q\}$ are complex, in which case $\lvert z - c_q \rvert$ is guaranteed to be finite for real $z$. However, when $\nu = -R$, there will be certain values of $R$ where $\Phi(-R, b, z) = 0$, implying that $z \in \{c_q\}$. We do not need to consider this possibility because the zeros $\nu^*$ of $\Phi(\nu,b,z)$ are isolated, meaning that $R$ can always be increased so that $z \notin \{c_q\}$ and $\lvert z-c_q \rvert$ is finite even when $\theta = \pi$. These facts lead to the conclusion that for $\lvert z \rvert < s < \infty$,
	
	\begin{equation}
		\label{eq:bounded sum}
		2 \sum_{\lvert c_q \rvert < s} \frac{1}{\lvert z - c_q \rvert} = \mathcal{O}(1) \; \; , \; \; R >> 1 \; \text{and} \; \Phi(-R, b, z) \neq 0
	\end{equation}\\	
	\indent Now let's turn our attention to the other term in (\ref{eq:log der bound}). Clearly, $\text{Re}(\nu) \leq \varepsilon$ on $C$. Furthermore, since $b\in \mathbb{R}^+$ and $\varepsilon$ can be made arbitrarily small, we have $\text{Re}(\nu) < \text{Re}(b)$. In this case, \cite{Luo} shows that
	
	\begin{equation}
		\label{eq:proximity bound}
		m[s, \Phi(\nu, b, z)] \leq s + \ln \left[1+ \max_{1\leq i \leq j} \left| \frac{(\nu)_i}{(b)_i} \right| \right]
	\end{equation}\\
	\noindent where $(\nu)_i = \nu (\nu + 1) \cdots (\nu + i - 1)$ and $j \in \mathbb{Z}^+$ is the smallest integer such that $\text{Re}(\nu) + j > 0$ and $\lvert \nu + j \rvert < \lvert b+j \rvert$. Assume $\lvert (\nu)_i / (b)_i \rvert$ is maximized when $i=k$ so that
	
	\begin{equation}
		\label{eq:simplified proximity bound}
		m[s, \Phi(\nu, b, z)] \leq s + \ln \left[1+ \left| \frac{(\nu)_k}{(b)_k} \right| \right]
	\end{equation}\\	
	\indent If $\lvert (\nu)_k / (b)_k \rvert \leq 1$, we have $\ln [1 + \lvert (\nu)_k / (b)_k \rvert] = \mathcal{O} (1)$, and upon substituting into (\ref{eq:log der bound}), we get
	
	\begin{equation}
		\label{eq:case z < 1}
		\frac{4sm[s, \Phi(\nu, b, z)]}{(s - \lvert z \rvert)^2} \leq \frac{4s(s + \mathcal{O}(1))}{(s - \lvert z \rvert)^2} = \mathcal{O}(1) \,\, , \,\, \lvert z \rvert < s
	\end{equation}\\	
	\noindent Now suppose that $\lvert (\nu)_k / (b)_k \rvert > 1$. First observe that
	
	\begin{equation}
		\label{eq:natural log bound}
		\ln \left[1 + \left| \frac{(\nu)_k}{(b)_k} \right|     \right] \leq \ln \left[2 \frac{\lvert(\nu)_k \rvert}{\lvert(b)_k \rvert} \right]
	\end{equation}\\	
	\noindent Also observe that $(\lvert \nu \rvert)_k \geq \lvert (\nu)_k \rvert$ so that we can write
	
	\begin{equation}
		\label{eq:looser bound}
		\ln \left[2 \frac{\lvert(\nu)_k \rvert}{\lvert(b)_k \rvert} \right] \leq \ln 2 + \ln [(\lvert \nu \rvert)_k] - \ln [ \lvert (b)_k \rvert]
	\end{equation}\\	
	\noindent Now let's substitute the definition $(\lvert \nu \rvert)_k = \Gamma(\lvert \nu \rvert + k) / \Gamma(\lvert \nu \rvert)$ so that $\ln [(\lvert \nu \rvert)_k] = \ln \Gamma(\lvert \nu \rvert + k) - \ln \Gamma(\lvert \nu  \rvert)$. Note that $\lvert \nu \rvert$ and $k$ are both positive real numbers so that $\text{arg}(\lvert \nu \rvert + k) = \text{arg}(\lvert \nu \rvert) = 0$. In this case, \cite{Whittaker} shows that as $\lvert \nu \rvert \rightarrow \infty$,
	
	\begin{equation}
		\label{eq:log-gamma asymptotics}
		\ln \Gamma (\lvert \nu \rvert + k) = \left(\lvert \nu \rvert + k - \frac{1}{2} \right) \ln (\lvert \nu \rvert) - \lvert \nu \rvert + \frac{1}{2} \ln (2\pi) + o(1)
	\end{equation}\\	
	\noindent with the term $o(1)$ going to zero as $\lvert \nu \rvert \rightarrow \infty$. Thus, for large $\lvert \nu \rvert$, $\ln[(\lvert \nu \rvert)_k] \approx k \ln (\lvert \nu \rvert) + o(1)$. This allows us to conclude that for $\lvert (\nu)_k / (b)_k \rvert > 1$ and $\lvert \nu \rvert >>1$,
	
	\begin{equation}
		\label{eq:proximity function bound}
		m[s, \Phi(\nu,b,z)] \leq s + \ln \left[1 + \left| \frac{(\nu)_k}{(b)_k} \right| \right] \leq s + k \ln \lvert \nu \rvert + \ln \left[ \frac{2}{\lvert (b)_k \rvert} \right] + o(1)
	\end{equation}\\	
	\indent Substituting (\ref{eq:bounded sum}), (\ref{eq:case z < 1}) and (\ref{eq:proximity function bound}) into (\ref{eq:log der bound}) and defining $\xi = \lvert (\nu)_k / (b)_k \rvert$, we can see that for some finite $M_1$ and $M_2$, $G(\nu)$ satisfies the following growth restriction on $C$
	
	\begin{equation}
		\label{eq:decay of G(nu)}
		\lvert G(\nu) \rvert = \frac{1}{\lvert \nu \rvert^2} \left| \frac{\Phi^{\prime}(\nu, b, z)}{\Phi(\nu, b, z)} \right| \leq \left\{\begin{array}{lr}
			\displaystyle \frac{M_1}{R^2}, & \text{for } \xi \leq 1\\[3ex]
			\displaystyle \frac{M_2 \ln R}{R^2}, & \text{for } \xi > 1
		\end{array}\right.
	\end{equation}
\end{proof}

Both bounds in (\ref{eq:decay of G(nu)}) converge uniformly to $0$ as $R\rightarrow \infty$. Thus, $G(\nu)$ satisfies the conditions of Jordan's lemma and we can conclude that the residue expansion in (\ref{eq:Residue expansion}) is a valid expression for $\mathcal{L}^{-1}\{G(\nu)\}$.

\section{Correction to Formula $4\beta$ in \cite[p. 114]{Buchholz}}\label{Appendix C}

This appendix derives (\ref{eq:4 beta Buchholz}), starting from Eq. (4b) in \cite[p. 113]{Buchholz}, which states

\begin{equation}
	\label{eq:4b}
	\begin{split}
		(c_1 - c_2) & \int \left(\frac{c_1 + c_2}{4} - \frac{\varkappa}{t}\right) \mathcal{M}_{\varkappa, \mu/2}(c_1 t) \mathcal{M}_{\varkappa, \mu/2}(c_2 t) dt \\[1ex] & = -c_2 \mathcal{M}_{\varkappa, \mu/2}(c_1 t) \mathcal{M}_{\varkappa, \mu/2}^{\prime}(c_2 t) + c_1 \mathcal{M}_{\varkappa, \mu/2}^{\prime}(c_1 t) \mathcal{M}_{\varkappa, \mu/2}(c_2 t)
	\end{split}
\end{equation}\\
\noindent where

\begin{equation}
	\label{eq:notation}
	\mathcal{M}_{\varkappa, \mu/2}^{\prime}(c_2 t) = \left. \frac{d\mathcal{M}_{\varkappa,\mu/2}(x)}{dz} \right\rvert_{z=c_2 t}
\end{equation}\\
\noindent We seek to evaluate (\ref{eq:4b}) as $c_1 \rightarrow c_2$. To this aim, let $c_1 = c_2 + \varepsilon$ and define

\begin{equation}
	I = \int \left(\frac{c_1 + c_2}{4} - \frac{\varkappa}{t}\right) \mathcal{M}_{\varkappa, \mu/2}(c_1 t) \mathcal{M}_{\varkappa, \mu/2}(c_2 t) dt
\end{equation}\\
\noindent so that with $c_2 = c$, (\ref{eq:4b}) can be written as

\begin{equation}
	\label{eq:simplified 4b}
	I = -\frac{c}{\varepsilon}\mathcal{M}_{\varkappa, \mu/2}[(c+\varepsilon)t] \mathcal{M}_{\varkappa,\mu/2}^{\prime}(ct) + \frac{c+\varepsilon}{\varepsilon} \mathcal{M}_{\varkappa,\mu/2}^{\prime}[(c+\varepsilon)t] \mathcal{M}_{\varkappa,\mu/2}(ct)
\end{equation}\\
\indent Expanding $\mathcal{M}_{\varkappa, \mu/2}[(c+\varepsilon)t]$ and $\mathcal{M}_{\varkappa,\mu/2}^{\prime}[(c+\varepsilon)t]$ to first order about $\varepsilon = 0$ yields

\begin{equation}
	\label{eq:Maclaurin series}
	\begin{split}
		I & = -\frac{c}{\varepsilon}[\mathcal{M}_{\varkappa,\mu/2}(ct) + t \varepsilon \mathcal{M}_{\varkappa,\mu/2}^{\prime}(ct)] \mathcal{M}_{\varkappa,\mu/2}^{\prime}(ct) \\[1ex] & + \frac{c+\varepsilon}{\varepsilon} [\mathcal{M}_{\varkappa,\mu/2}^{\prime}(ct) + t \varepsilon \mathcal{M}_{\varkappa,\mu/2}^{\prime\prime}(ct)] \mathcal{M}_{\varkappa,\mu/2}(ct)
	\end{split}
\end{equation}\\
\noindent After distributing and taking the limit $\varepsilon \rightarrow 0$, (\ref{eq:Maclaurin series}) simplifies to

\begin{equation}
	\label{eq:simplified series}
	\begin{split}
		I & = \int \left(\frac{c}{2} - \frac{\varkappa}{t}\right) \mathcal{M}_{\varkappa,\mu/2}^2(ct)dt \\[1ex] & = -ct [\mathcal{M}_{\varkappa,\mu/2}^{\prime}(ct)]^2 + \mathcal{M}_{\varkappa,\mu/2}(ct) \mathcal{M}_{\varkappa,\mu/2}^{\prime}(ct) + ct \mathcal{M}_{\varkappa,\mu/2}^{\prime\prime}(ct) \mathcal{M}_{\varkappa,\mu/2}(ct)
	\end{split}
\end{equation}\\
\noindent From the definition of $\mathcal{M}_{\varkappa,\mu/2}(z)$ in (\ref{eq:Whittaker M function}) and with

\begin{equation}
	\label{eq:parameter defs}
	a = \frac{1+\mu}{2} - \varkappa, \quad b = 1 + \mu, \quad \gamma = \frac{z^{b/2} e^{-z/2}}{\Gamma(b)}
\end{equation}\\
\noindent the derivatives of $\mathcal{M}_{\varkappa,\mu/2}(z)$ are given by

\begin{equation}
	\label{eq:M prime and dprime}
	\begin{split}
		&\mathcal{M}_{\varkappa,\mu/2}^{\prime}(z) = \left( \frac{b}{2z} - \frac{1}{2}\right) \mathcal{M}_{\varkappa,\mu/2}(z) + \gamma \Phi^{\prime}(a,b,z) \\[1ex]
		&\begin{split}
			\mathcal{M}_{\varkappa,\mu/2}^{\prime\prime}(z) & = \left[ \left( \frac{b}{2z} - \frac{1}{2}\right)^2 - \frac{b}{2z^2}\right] \mathcal{M}_{\varkappa,\mu/2}(z) + \gamma \left(\frac{b}{z} - 1 \right) \Phi^{\prime}(a,b,z) \\[1ex] & + \gamma \Phi^{\prime\prime}(a,b,z)
	    \end{split}
	\end{split}
\end{equation}\\
\indent Using entries 13.4.7 - 13.4.9 in \cite{Abramowitz}, it can be shown that

\begin{equation}
	\label{eq:Phi dprime}
	\Phi^{\prime\prime}(a,b,z) = \left(1 - \frac{b}{z}\right)\Phi^{\prime}(a,b,z) + \frac{a}{z}\Phi(a,b,z)
\end{equation}\\
\noindent which simplifies $\mathcal{M}_{\varkappa,\mu/2}^{\prime\prime}(z)$ to

\begin{equation}
	\label{eq:simplified M dprime}
	\mathcal{M}_{\varkappa,\mu/2}^{\prime\prime}(z) = \left[\left(\frac{b}{2z} - \frac{1}{2}\right)^2 - \frac{b}{2z^2}\right] \mathcal{M}_{\varkappa,\mu/2}(z) + \frac{\gamma a}{z} \Phi(a,b,z)
\end{equation}\\
\noindent Return to (\ref{eq:simplified series}) and let $\mathcal{D}(z) = \mathcal{M}_{\varkappa,\mu/2}(z) \mathcal{M}_{\varkappa,\mu/2}^{\prime}(z) + z \mathcal{M}_{\varkappa,\mu/2}^{\prime\prime}(z) \mathcal{M}_{\varkappa,\mu/2}(z)$. After substituting $\mathcal{M}_{\varkappa,\mu/2}^{\prime}(z)$ and $\mathcal{M}_{\varkappa,\mu/2}^{\prime\prime}(z)$ from (\ref{eq:M prime and dprime}) and (\ref{eq:simplified M dprime}), respectively, and using the shorthand notation $\Phi = \Phi(a,b,z)$ (with a similar interpretation for $\Phi^{\prime}$ and $\Phi^{\prime\prime}$), $\mathcal{D}$ becomes

\begin{equation}
	\label{eq:D(z)}
	\begin{split}
		\mathcal{D}(z) & = \gamma \left(\frac{b}{2z} - \frac{1}{2}\right) \Phi \mathcal{M}_{\varkappa,\mu/2}(z) + \gamma^2 \Phi \Phi^{\prime} \\[1ex] & + z \gamma \Phi \left\{ \left[\left(\frac{b}{2z} - \frac{1}{2}\right)^2 - \frac{b}{2z^2}\right] \mathcal{M}_{\varkappa,\mu/2}(z) + \frac{\gamma a}{z} \Phi\right\}
	\end{split}
\end{equation}\\
\indent Now write (\ref{eq:D(z)}) entirely in terms of the Whittaker $\mathcal{M}$ function

\begin{equation}
	\label{eq:D in terms of M}
	\begin{split}
		\mathcal{D}(z) & = z \mathcal{M}^2_{\varkappa,\mu/2}(z) \left[\left( \frac{b}{2z} - \frac{1}{2}\right)^2 - \frac{b}{2z^2} + \frac{a}{z} - \frac{1}{2z} + \frac{b}{2z^2}\right]\\[1ex] & + \mathcal{M}_{\varkappa,\mu/2}(z) \left[ \mathcal{M}_{\varkappa,\mu/2}^{\prime}(z) - \left(\frac{b}{2z} - \frac{1}{2}\right)\mathcal{M}_{\varkappa,\mu/2}(z)\right]
	\end{split}
\end{equation}\\
\noindent Recognizing that $\mathcal{M}_{\varkappa,\mu/2}(z) \mathcal{M}_{\varkappa,\mu/2}^{\prime}(z) = \frac{1}{2} \frac{d}{dz} \mathcal{M}_{\varkappa,\mu/2}^2(z)$ and noting from (\ref{eq:parameter defs}) that $b/2-a = \varkappa$ and $b^2 - 2b = 1 + \mu^2$, (\ref{eq:D in terms of M}) can also be written as

\begin{equation}
	\label{eq:simplified D(z)}
	\mathcal{D}(z) = z \mathcal{M}_{\varkappa,\mu/2}^2(z) \left[ \frac{\mu^2-1}{4z^2} - \frac{\varkappa}{z} + \frac{1}{4}\right] + \frac{1}{2} \frac{d}{dz} \mathcal{M}_{\varkappa,\mu/2}^2(z)
\end{equation}\\
\noindent With $z = ct$ in (\ref{eq:simplified D(z)}), substituting back into (\ref{eq:simplified series}) yields the final expression

\begin{equation}
	\label{eq:final expression}
	\begin{split}
		I & = \int \left(\frac{1}{2} - \frac{\varkappa}{ct}\right) \mathcal{M}_{\varkappa,\mu/2}^2(ct)d(ct) = -ct [\mathcal{M}_{\varkappa,\mu/2}^{\prime}(ct)]^2 + \mathcal{D}(ct) \\[1ex] & = -(ct) \mathcal{M}_{\varkappa,\mu/2}^2(ct) \left( -\frac{1}{4} + \frac{\varkappa}{ct} + \frac{1-\mu^2}{4c^2 t^2}  \right) - ct [\mathcal{M}_{\varkappa,\mu/2}^{\prime}(ct)]^2 \\[1ex] & \quad\quad\quad + \frac{1}{2} \frac{d}{d(ct)} \mathcal{M}_{\varkappa,\mu/2}^2(ct)
	\end{split}
\end{equation}

\end{appendices}

\bibliography{sn-bibliography}

\end{document}